\def\+{{\oplus}}
\newtheorem{thm}{Theorem}[section]
\newtheorem{theorem}{Theorem}[section]
\newtheorem{lemma}[thm]{Lemma}
\newtheorem{cor}[thm]{Corollary}
\newtheorem{conj}[thm]{Conjecture}
\newtheorem{prop}[thm]{Proposition}
\newtheorem{defn}[thm]{Definition}
\newtheorem{example}[thm]{Example}
\newtheorem{remark}[thm]{Remark}
\numberwithin{equation}{section}
\begin{document}

%
\title[Boolean nested canalizing functions]{Boolean nested canalizing functions: a comprehensive analysis}

\author[Y. Li, J. O. Adeyeye, D. Murrugarra, B. Aguilar, R. Laubenbacher]{Yuan Li$^{1\ast}$, John O. Adeyeye $^{2\ast}$, David Murrugarra$^{3\ast\ast}$, Boris Aguilar$^{4}$, Reinhard Laubenbacher$^{5\ast\ast}$}
\address{{\small $^{1}$Department of Mathematics, Winston-Salem State University, NC
27110,USA}\\
{\small email: liyu@wssu.edu }\\
$^{2}$Department of Mathematics, Winston-Salem State University, NC 27110,USA,
{\small email: adeyeyej@wssu.edu}\\
{\small $^{3}$ School of Mathematics, Georgia Institute of Technology, Atlanta, GA 30332-0160 USA}\\
{\small email: davidmur@math.gatech.edu}\\
{\small $^{4}$Department of Computer Science, Virginia Tech
Blacksburg, VA 24061-0123, USA}\\
{\small email: baguilar@vt.edu}\\
{\small $^{5}$ Virginia Bioinformatics Institute, Virginia Tech, Blacksburg, VA
24061,USA and Department of Mathematics, Virginia Tech, Blacksburg, VA 24061-0123, USA}\\
{\small email: reinhard@vbi.vt.edu}}
\thanks{$^{\ast}$ Supported by Award $\#$ W911NF-11-10166 from the US DoD. $^{\ast\ast}$ 
Supported by NSF Grant CMMI-0908201.}
\keywords{Boolean function, nested canalyzing function, layer number, extended monomial, multinomial coefficient,
dynamical system, Hamming weight, activity, average sensitivity.}
\date{}

\begin{abstract}
Boolean network models of molecular regulatory networks have been used successfully in computational systems biology.
The Boolean functions that appear in published models tend to have special properties, in particular the property
of being nested canalizing, a concept inspired by the concept of canalization in evolutionary biology. 
It has been shown that networks comprised of nested canalizing functions have dynamic properties that
make them suitable for modeling molecular regulatory networks, namely a small number of (large) attractors,
as well as relatively short limit cycles.

This paper contains a detailed analysis of this class of functions, based on a novel normal form as
polynomial functions over the Boolean field. The concept of layer is introduced that stratifies variables into
different classes depending on their level of dominance. Using this layer concept a closed form formula
is derived for the number of nested canalizing functions with a given number of variables. Additional
metrics considered include 
Hamming weight, the activity number of any variable, and the average sensitivity of the function. 
It is also shown that the average sensitivity of any nested canalizing function is between 0 and 2. 
This provides a rationale for why nested canalizing functions are stable, 
since a random Boolean function in $n$ variables has average sensitivity $\frac{n}{2}$. 
The paper also contains experimental evidence that the layer number is an important factor in network
stability. 
\end{abstract}
\maketitle

\interdisplaylinepenalty=2500

\section{Introduction}

\label{sec-intro} Canalizing Boolean functions were introduced by S. Kauffman and collaborators \cite{Kau1}
as appropriate rules in Boolean network models of gene regulatory networks.
More recently, a subclass of these functions, so-called 
\emph{nested canalizing} functions (NCF) was introduced \cite{Kau2} 
and studied from the point of view of stability properties of network dynamics. 
A multi-state version of such functions has been introduced in \cite{Mur,Mur2}, where it
was shown that networks whose dynamics are controlled by nested canalizing functions
have similar stability properties, namely large attractor basins and short limit cycles. 
An analysis of published models, both Boolean and multi-state, of molecular regulatory
networks revealed that the large majority of regulatory rules in them is canalizing, with most of these in fact
nested canalizing \cite{Har, Kau3, Nik, Mur}. Thus, nested canalizing rules and
the properties of networks governed by them are important to study because of their relevance in systems biology. Furthermore,
they are also important in computational science. In~\cite{Jar} it was shown that the class of nested canalizing Boolean functions
is identical to the class of so-called unate cascade Boolean functions, which has been studied extensively in engineering
and computer science. It was shown, for instance, in \cite{But} that this class has the
property that it corresponds exactly to the class of Boolean functions with corresponding
binary decision diagrams of shortest average path length. Thus, a more
detailed mathematical study of nested canalizing functions might have applications to
problems in engineering as well.

In this paper, we carry out such a detailed study for the case of Boolean nested canalizing functions, 
obtaining a more explicit characterization than the one obtained in \cite{Abd2}. We introduce a new concept, the
\emph{layer number}, leading to a finer classification and, in particular, an explicit formula for the number of nested canalizing functions. This provides a closed form solution to the recursive formula derived in \cite{Abd2}, which may be of independent
mathematical interest. We also study standard properties of Boolean functions, such as variable activity and
average sensitivity. In particular, we obtain a formula for the average sensitivity of
a nested canalizing function with $n$ variables, and show that, for all $n$, it lies between $\frac{n}{2^{n-1}}$ as a lower bound
and $2$ as upper bound, which is much smaller
than $\frac{n}{2}$, the average sensitivity of a random Boolean function in $n$ variables. 
This can be interpreted as providing a theoretical justification for why
Boolean networks with nested canalizing rules are stable.
We also find a formula for the Hamming weight (the number of 1's in its truth table) of a nested canalizing function. 
Finally, we conjecture that a nested canalizing function with $n$ variables has maximal average sensitivity
if it has the maximal layer number $n-1$. Based on this result, we conjecture the tight upper bound $\frac{4}{3}$ for this value.
The paper is organized as follows. We first review existing results on nested
canalizing functions and networks, after which we introduce some definitions and notation.
The subsequent sections contain the main results of the paper.
\section{Background}
In \cite{Mor} it was shown that the dynamics
of a Boolean network which operates according to canalizing rules is robust
with regard to small perturbations. In \cite{Win2},  an exact formula was
derived for the number of Boolean canalizing functions. In \cite{Yua2}, 
the definition of canalizing functions was generalized to any finite field $\mathbb{F}_{q}$,
where $q$ is a power of a prime.  Both exact formulas and
asymptotic values for the number of generalized canalizing functions were obtained.

One important characteristic of (nested) canalizing functions is that they
exhibit a stabilizing effect on the dynamics of a Boolean network. That is, small
perturbations of an initial state do not grow larger over time and eventually
end up in the same attractor as the initial state. This stability is typically
measured using so-called Derrida plots which monitor the Hamming distance
between a random initial state and its perturbed state as both evolve over
time. If the Hamming distance decreases over time, the system is considered
stable. The slope of the Derrida curve is used as a numerical measure of
stability. Roughly speaking, the phase space of a stable system has few
components and the limit cycle of each component is short.

In \cite{Kau3}, the authors studied the dynamics of nested canalizing Boolean
networks over a variety of dependency graphs. That is, for a given random
graph on $n$ nodes, where the in-degree of each node is chosen at random
between $0$ and $k$, for $k\leq n$, a nested canalizing function is assigned
to each node in the in-degree variables of that node. The dynamics of
these networks was then analyzed and the stability measured using Derrida
plots. It was shown there that nested canalizing networks are remarkably stable
regardless of the in-degree distribution and that the stability increases as
the average number of inputs of each node increases.

Most published molecular networks are given in the form of a wiring diagram,
or dependency graph, constructed from experiments and prior published
knowledge. However, for most of the molecular species in the network, little
knowledge, if any, could be deduced about their regulatory rules, for
instance in the gene transcription networks in yeast \cite{Herr} and E. Coli
\cite{Bar}. Each one of these networks contains more than 1000 genes. Kauffman
et. al \cite{Kau2} investigated the effect of the topology of a sub-network of
the yeast transcriptional network where many of the transcriptional rules are
not known. They generated ensembles of different models where all models have
the same dependency graph. Their heuristic results imply that the dynamics of
those models which used only nested canalizing functions were far more stable
than the randomly generated models. Since it is already established that the
yeast transcriptional network is stable, this suggests that the unknown
interaction rules are very likely nested canalizing functions. Recently, a transcriptional network of yeast, with 3459
genes as well as the transcriptional networks of E. Coli (1481 genes) and B.
subtillis (840 genes) have been analyzed in a similar fashion, with similar findings \cite{Bal}.

The notion of sensitivity was introduced in~\cite{Coo}. The sensitivity of a Boolean function
of a variable $x$ is defined as the number of Hamming neighbors of $x$ on which the function value is different from that on $x$.
The average sensitivity of the function is then computed by taking the average value of the sensitivities 
of the function on all possible input values $x$.  Although the definition is straightforward, 
the sensitivity measure is understood only for a few classes of functions. In \cite {Shm2}, 
asymptotic formulas for a random monotone Boolean function are derived. 
Recently, S. Zhang \cite{Zha} found a formula  for the average sensitivity of any monotone Boolean function, and derived a tight bound. In \cite{Shm}, I. Shmulevich and S. A. Kauffman obtained the average sensitivity of Boolean functions with only one canalizing variable. In~\cite{Layne}, Layne et al. studied network stability of partially canalizing functions using the average sensitivity of such functions.
\section{Definitions and Notation}\label{2} 

To set the stage for this paper and for the sake of completeness 
we restate some well-known definitions; see, e.g., \cite{Kau2}.
Let $\mathbb{F}=\mathbb{F}_{2}$ be the field with $2$ elements, 
and let $f:\mathbb{F}^{n}\rightarrow \mathbb{F}$. It is well known
\cite{Lid} that $f$ can be expressed as a polynomial, called the algebraic
normal form (ANF) of $f$:
\begin{displaymath}
f(x_{1},\ldots,x_{n})=\bigoplus_{\substack{0\leq k_i\leq 1,\\i=1,\ldots,n}}a_{k_{1}\ldots k_{n}}{x_{1}}^{k_{1}}\cdots{x_{n}}^{k_{n}},%
\end{displaymath}
where each coefficient $a_{k_{1}\ldots k_{n}}\in\mathbb{F}$. The number $k_{1}+k_{2}+\cdots+k_{n}$ is the multivariate degree of
the term $a_{k_{1}k_{2}\ldots k_{n}}{x_{1}}^{k_{1}}{x_{2}}^{k_{2}}\cdots
{x_{n}}^{k_{n}}$ for each nonzero coefficient $a_{k_{1}k_{2}\ldots k_{n}}$. The
greatest degree of all the terms of $f$ is called its algebraic degree,
denoted by $deg(f)$.  The symbol $\oplus$ stands for addition  modulo $2$,
whereas the symbol $+$ will be reserved for addition of real numbers.
First we need a technical definition.

\begin{defn} \label{def2.1} 
The function $f(x_{1},x_{2},\ldots, x_{n})$ is essential in the variable $x_{i}$
if there exist $x_{1}^{*},\ldots,x_{i-1}^{*}$
$,x_{i+1}^{*},\ldots, x_{n}^{*}\in \mathbb F$ such that 
$$
f(x_{1}^{*},\ldots,x_{i-1}
^{*},0,x_{i+1}^{*},\ldots,x_{n}^{*})\neq f(x_{1}^{*},\ldots,x_{i-1}%
^{*},1,x_{i+1}^{*},\ldots,x_{n}^{*}).
$$
\end{defn}

The next two definitions state the requirements for Boolean functions to be canalizing, respectively
nested canalizing. The concept of \emph{canalization} in gene regulation goes back to work of the geneticist C. Waddington 
in the 1940s \cite{Wad}, who developed it as a possible answer to the question of why the outcome of embryonal 
development leads to predictable phenotypes in the face of widely varying environmental conditions. 
Canalized traits of an organisms, those that are stable under (some) environmental perturbations, 
are phenotypically expressed only in certain environments or genetic backgrounds. The regulation of the 
genes responsible for the development of such traits by other genes has to be able to buffer these perturbations.
In \cite{Kauffman1974}, S. Kauffman tried to capture the spirit of these features
in the context of Boolean network models of gene regulatory networks. In that setting, a Boolean function $f$ is
canalizing in a variable $x$, with canalizing input $a$ and canalized output $b$, if, whenever
$x$ takes on the value $a$, then $f$ outputs the value $b$, regardless of the states of the other variables
in $f$. Boolean network models built from functions with this property
have been shown to have dynamic features that match those of gene regulatory networks. 

\begin{defn}\label{def2.2} 
Let $a, b\in\mathbb{F}$. A function $f(x_{1},x_{2},\ldots,x_{n})$ is $<i:a:b>$
canalizing if $f(x_{1},\ldots,x_{i-1},a,x_{i+1},\ldots,x_{n})=b$, for all $x_{j}$, $j\neq
i$, where $i\in\{1,\ldots,n\}$.
\end{defn}
Thus, if the function receives its canalizing input $a$ for variable $x_i$, then the function obtained by
substituting $a$ for $x_i$ becomes a constant function equal to $b$.

The motivation for the next definition is the general stability of gene regulatory networks. In the context
of a Boolean representation of gene regulation, if a gene $x_i$ does not receive its canalizing input $a$, then, in principle,
the function obtained by substituting $a\oplus 1$ for $x_i$ can be a random Boolean function, with uncertain stability
properties. In order to remedy this deficiency, it was proposed in \cite{Kau2} that in this case there should be another
variable, $x_j$ that is canalizing for a particular input; and so on. The next definition captures this intuition.

\begin{defn}\label{def2.3} 
Let $f$ be a Boolean function in $n$ variables. Let $\sigma$ be
a permutation of the set $\{1,2,\ldots,n\}$. The function $f$ is a \emph{nested canalizing
function} in the variable order
$x_{\sigma(1)},\ldots,x_{\sigma(n)}$ with canalizing input values
$a_{1},\ldots,a_{n}$ and canalized values $b_{1},\ldots,b_{n}$, if it can be
represented in the form
\begin{displaymath}
f(x_{1},\ldots,x_{n})=
\left\{
\begin{array}
[c]{ll}%
b_{1} & x_{\sigma(1)}=a_{1},\\
b_{2} & x_{\sigma(1)}= \overline{ a_{1}}, x_{\sigma(2)}=a_{2},\\
b_{3} & x_{\sigma(1)}= \overline{ a_{1}}, x_{\sigma(2)}= \overline{ a_{2}},
x_{\sigma(3)}=a_{3},\\
\cdots  & \\
b_{n} & x_{\sigma(1)}= \overline{ a_{1}},\ldots,x_{\sigma(n-1)}= \overline{ a_{n-1}}, x_{\sigma(n)}=a_{n},\\
\overline{b_{n}} & x_{\sigma(1)}= \overline{ a_{1}},\ldots,x_{\sigma(n-1)}= \overline{ a_{n-1}}, x_{\sigma(n)}=\overline{
a_{n}}.
\end{array}
\right.
\end{displaymath}
Here, $\overline{a}=a\oplus 1$.

The function $f$ is nested canalizing if $f$ is nested
canalizing in the variable order $x_{\sigma(1)},\ldots,x_{\sigma(n)}$ for some
permutation $\sigma$.
\end{defn}

Let $\alpha=(a_{1},a_{2},\ldots,a_{n})$ and $\beta=(b_{1},b_{2},\ldots,b_{n})$.
We say that $f$ is $\{\sigma:\alpha:\beta\}$ NCF if it is NCF in the variable
order $x_{\sigma(1)},\ldots,x_{\sigma(n)}$ with canalizing input values
$\alpha=(a_{1},\ldots,a_{n})$ and canalized values $\beta=(b_{1},\ldots, b_{n})$.

Given a vector $\alpha=(a_{1},a_{2},\ldots,a_{n})$, we define 
\begin{displaymath}
\alpha
^{i_{1},\ldots,i_{k}}=(a_{1},\ldots,\overline{a_{i_{1}}},\ldots,\overline
{a_{i_{k}}},\ldots,a_{n}).
\end{displaymath}
Then, from the above definition, we immediately have the following result.

\begin{prop}
The function $f$ is $\{\sigma:\alpha:\beta\}$ NCF  if and only if  $f$ is
$\{\sigma:\alpha^{n}:\beta^{n}\}$ NCF.
\end{prop}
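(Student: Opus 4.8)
The plan is a direct comparison of the two piecewise defining formulas from Definition~\ref{def2.3}. Write $\alpha=(a_1,\ldots,a_n)$ and $\beta=(b_1,\ldots,b_n)$, so that by the definition of the flip notation $\alpha^n=(a_1,\ldots,a_{n-1},\overline{a_n})$ and $\beta^n=(b_1,\ldots,b_{n-1},\overline{b_n})$ differ from $\alpha$ and $\beta$ only in their last coordinate. Let $g$ denote the function defined by the right-hand side of the display in Definition~\ref{def2.3} with data $(\sigma,\alpha,\beta)$, and let $g'$ denote the analogous function with data $(\sigma,\alpha^n,\beta^n)$. Since being $\{\sigma:\alpha:\beta\}$ NCF means precisely $f=g$, and being $\{\sigma:\alpha^n:\beta^n\}$ NCF means precisely $f=g'$, it suffices to prove the function identity $g=g'$; the claimed equivalence for $f$ then follows at once.

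First I would observe that the first $n-1$ branches of the two formulas coincide. Each such branch is governed only by the conditions $x_{\sigma(j)}=a_j$ or $x_{\sigma(j)}=\overline{a_j}$ for $j\le n-1$ together with the prescribed output $b_j$, and all of these data $a_1,\ldots,a_{n-1},b_1,\ldots,b_{n-1}$ are common to $(\alpha,\beta)$ and $(\alpha^n,\beta^n)$. Hence $g$ and $g'$ agree on every input falling under one of the top $n-1$ branches.

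It then remains to check the two innermost branches, i.e.\ the inputs with $x_{\sigma(1)}=\overline{a_1},\ldots,x_{\sigma(n-1)}=\overline{a_{n-1}}$. For $g$ such an input is sent to $b_n$ when $x_{\sigma(n)}=a_n$ and to $\overline{b_n}$ when $x_{\sigma(n)}=\overline{a_n}$. For $g'$ the last canalizing input is $\overline{a_n}$ and the last canalized value is $\overline{b_n}$, so the input is sent to $\overline{b_n}$ when $x_{\sigma(n)}=\overline{a_n}$ and to $\overline{\overline{b_n}}=b_n$ when $x_{\sigma(n)}=\overline{\overline{a_n}}=a_n$. Using the involution $\overline{\overline{c}}=c$ for $c\in\mathbb{F}$, these two prescriptions are the same; simultaneously negating the last canalizing input and the last canalized value merely interchanges the order in which the two innermost cases are written. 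Thus $g=g'$, which finishes the argument.

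There is no genuine obstacle here: the statement is a bookkeeping identity expressing that the innermost canalizing layer is symmetric under negating its input value $a_n$ and its output value $b_n$ together. The only point requiring care is correctly tracking the double negations $\overline{\overline{a_n}}=a_n$ and $\overline{\overline{b_n}}=b_n$, so that the swapped innermost branches of $g$ and $g'$ are matched to the right inputs.
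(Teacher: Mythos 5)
Your proof is correct and matches the paper's approach: the paper treats this proposition as immediate from Definition~\ref{def2.3}, and your argument is precisely that immediate verification written out, namely that simultaneously flipping $a_n$ and $b_n$ just swaps the two innermost branches of the defining formula while leaving the first $n-1$ branches untouched. Nothing further is needed.
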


\begin{example}\label{exa2.1} 
The function $f_1(x_{1},x_{2},x_{3})=x_{1}(x_{2}\oplus 1)x_{3}\oplus 1$ is
$\{(1,2,3):(0,1,0):(1,1,1)\}$ NCF.
Actually, one can check that this function is nested canalizing in any variable order. Its truth table 
is given in Table~\ref{table_ex}.
\end{example}

\begin{table}
  \centering  
\begin{tabular}{| c  c  c | c |}\hline
 $x_1$ & $x_2$ & $x_3$ & $f_1$ \\ \hline
 0 & 0 & 0 & 1 \\
 0 & 0 & 1 & 1 \\
 0 & 1 & 0 & 1 \\
 0 & 1 & 1 & 1 \\
 1 & 0 & 0 & 1 \\
 1 & 0 & 1 & 0 \\
 1 & 1 & 0 & 1 \\
 1 & 1 & 1 & 1 \\\hline
\end{tabular}
\begin{tabular}{| c  c  c | c |}
\hline
$x_1$ & $x_2$ & $x_3$ & $f_2$ \\ \hline
 0 & 0 & 0 & 0 \\
 0 & 0 & 1 & 0 \\
 0 & 1 & 0 & 1 \\
 0 & 1 & 1 & 0 \\
 1 & 0 & 0 & 1 \\
 1 & 0 & 1 & 1 \\
 1 & 1 & 0 & 1 \\
 1 & 1 & 1 & 1 \\
\hline
\end{tabular}
  \caption{Truth table for Example~\ref{exa2.1} (left) and Example~\ref{exa2.2} (right).}
  \label{table_ex}
\end{table}

\begin{example}\label{exa2.2} 
Let $f_2(x_{1},x_{2},x_{3})=(x_{1}\oplus 1)(x_{2}(x_{3}\oplus 1)\oplus 1)\oplus 1$. This
function is
$\{(1,2,3):(1,0,1):(1,0,0)\}$ NCF. It is also $\{(1,3,2):(1,1,1):(1,0,1)\}$ NCF.
One can check that this function can be nested canalizing in only two variable
orders, namely $(x_{1},x_{2},x_{3})$ and $(x_{1},x_{3},x_{2})$. See its truth table in Table~\ref{table_ex}.
\end{example}

From the above definitions, we know that, if a function is NCF, all the variables appearing in it
must be essential. However, a constant function $b$ can be  $<i:a:b>$ NCF for any $i$ and $a$.
\section{A Detailed Categorization of Nested Canalizing Functions}\label{3}

As we will see, in a nested canalizing function, some variables are more
dominant than others. We will classify all the variables of an NCF into
different levels according to the extent of their dominance.  

\begin{defn}\label{def3.1} \cite{Win} 
A function $M(x_{1},\ldots,x_{n})$ is an extended monomial of
essential variables $x_{1},\ldots,x_{n}$ if 
$$M(x_{1},\ldots,x_{n})=(x_{1}\oplus a_{1})\cdots (x_{n}\oplus a_{n}),$$ 
where $a_{i}\in\mathbb{F}_{2}$ for $i\in\{1,\dots,n\}$.
\end{defn}

We will rewrite Theorem 3.1 in~\cite{Abd2} with more information in the main theorem of this section. Basically, we will obtain a unique (the old one is not) algebraic normal form (polynomial form). Because of the uniqueness, the enumeration of the number of nested
canalizing functions, computation of their Hamming weight, 
as well as activity and average sensitivity can be done. 
Besides, in the old form, the variable order of a nested canalizing function is not unique. 

\begin{lemma}\label{lm3.1}  
The function $f(x_{1},x_{2},\ldots,x_{n})$ is $<i:a:b>$ canalizing if and only if
$f(x_{1},\ldots,x_{n})=(x_{i}\oplus a)Q(x_{1},\ldots, x_{i-1},x_{i+1},\ldots,
x_{n})\oplus b$ for some polynomial $Q$.
\end{lemma}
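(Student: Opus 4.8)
The plan is to prove both directions directly from the defining condition, working over $\mathbb{F}_2$ and exploiting the fact that $x_i \oplus a$ acts as an indicator for the canalizing input. The key observation is that in $\mathbb{F}_2$, the expression $x_i \oplus a$ equals $0$ exactly when $x_i = a$ and equals $1$ exactly when $x_i = \overline{a}$, so it serves as a switch that kills the $Q$-term precisely on the canalizing slice.

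For the easy direction ($\Leftarrow$), I would assume $f = (x_i \oplus a)\,Q \oplus b$ and simply substitute $x_i = a$. Then $x_i \oplus a = a \oplus a = 0$, so the product term vanishes regardless of the other variables, leaving $f = 0 \cdot Q \oplus b = b$. This shows $f(x_1,\ldots,x_{i-1},a,x_{i+1},\ldots,x_n) = b$ for all remaining $x_j$, which is exactly the $<i:a:b>$ canalizing condition of Definition~\ref{def2.2}.

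For the forward direction ($\Rightarrow$), I would start from the ANF of $f$ and split it according to the power of $x_i$. Since $x_i^2 = x_i$ over $\mathbb{F}_2$, every Boolean function can be written uniquely as $f = x_i \, g \oplus h$, where $g$ and $h$ are polynomials in the remaining variables $x_1,\ldots,x_{i-1},x_{i+1},\ldots,x_n$. The canalizing hypothesis says that setting $x_i = a$ forces $f = b$. Substituting $x_i = a$ gives $a\,g \oplus h = b$ identically in the other variables. I would then handle the two cases $a=0$ and $a=1$: when $a=0$ this yields $h = b$ and hence $f = x_i g \oplus b = (x_i \oplus 0)g \oplus b$; when $a=1$ it yields $g \oplus h = b$, so $h = g \oplus b$ and $f = x_i g \oplus g \oplus b = (x_i \oplus 1)g \oplus b$. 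In either case $f = (x_i \oplus a)Q \oplus b$ with $Q = g$, completing the argument.

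The main subtlety, rather than a genuine obstacle, is justifying the unique decomposition $f = x_i g \oplus h$ and correctly treating $b$ as a constant polynomial absorbed into $h$; one must be careful that the identity $a g \oplus h = b$ holds as an identity of polynomials in the remaining variables, not merely at a single point, so that $g$ and $h$ (equivalently $Q$) are honest polynomials in those variables. Once the case split on $a \in \{0,1\}$ is organized cleanly, both implications fall out by direct substitution, and the factor $x_i \oplus a$ is seen to encode exactly the canalizing structure.
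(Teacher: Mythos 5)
Your proof is correct and follows essentially the same route as the paper: both decompose $f$ via its ANF as $f = x_i g \oplus h$ in the variable $x_i$ and then use the canalizing condition, holding identically in the remaining variables, to force the term not multiplied by $(x_i \oplus a)$ to be the constant $b$. The only cosmetic difference is that the paper absorbs the input value $a$ uniformly by rewriting $x_i g = (x_i \oplus a)g \oplus a g$, whereas you split into the cases $a=0$ and $a=1$; the substance is the same.
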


\begin{proof}
Using the algebraic normal form of $f$, we rewrite it as 
\begin{displaymath}
f=x_{i}g_{1}(x_{1},\ldots,x_{i-1},x_{i+1},\ldots,x_{n})\oplus g_{0}(x_{1},\ldots,x_{i-1},x_{i+1},\ldots,x_{n}),
\end{displaymath} 
where $g_1$ and $g_0$ are the quotient and remainder of $f$ when divided by $x_i$.
Hence,   
\begin{align*}
\label{}
f(X)=f(x_{1},\ldots,x_{n})
=(x_{i} \oplus a)g_{1}(x_{1},\ldots,x_{i-1},x_{i+1},\ldots,x_{n})\\
\oplus ag_{1}(x_{1},\ldots,x_{i-1},x_{i+1},\ldots,x_{n})\\
\oplus g_{0}(x_{1},\ldots,x_{i-1},x_{i+1},\ldots,x_{n}).    
\end{align*}
Let $g_{1}(x_{1},\ldots,x_{i-1},x_{i+1},\ldots,x_{n})=Q(x_{1},\ldots,x_{i-1},x_{i+1}\ldots , x_{n})$, 
and
\begin{align*}
\label{}
r(x_{1},\ldots,x_{i-1},x_{i+1},\ldots,x_{n}) = & ag_{1}(x_{1},\ldots,x_{i-1},x_{i+1}
,\ldots,x_{n}) \\
   & \oplus g_{0}(x_{1},\ldots,x_{i-1},x_{i+1},\ldots,x_{n}). 
\end{align*}
Then
\begin{align*}
\label{}
 f(X)=f(x_{1},\ldots,x_{n})=   & (x_{i}\oplus a)Q(x_{1},\ldots, x_{i-1},x_{i+1}\ldots
,x_{n})  \\
    & \oplus r(x_{1},\ldots,x_{i-1},x_{i+1}%
,\ldots,x_{n}). 
\end{align*}
Since $f(X)$ is $<i:a:b>$ canalizing, we get that 
$$
f(x_{1},\ldots,x_{i-1}
,a,x_{i+1},\ldots,x_{n})=b
$$ 
for any $x_{1},\ldots, x_{i-1},x_{i+1},\ldots
,x_{n}$, i.e., $r(x_{1},\ldots,x_{i-1},x_{i+1}%
,\ldots,x_{n})=b$ for any $x_{1},\ldots,x_{i-1},x_{i+1}%
,\ldots,x_{n}$. So $r(x_{1},\ldots,x_{i-1},x_{i+1}%
,\ldots,x_{n})$ must be the constant
$b$. This shows necessity, and sufficiency is obvious.
\end{proof}

\begin{remark}\label{remark1}\label{re1} 
We have the following observations.
\begin{enumerate}
\item
When we contrast this lemma to the first part of Theorem 3.1 in
\cite{Abd2}, it is important to note that $x_{i}$ is not essential in $Q$. 
\item
In \cite{Yua2}, there is a general version of this result over any finite field. 
\item
In the above lemma, if $f$ is constant, then
$Q=0$.
\end{enumerate}
\end{remark}

From Definition \ref{def2.3}, we have the following result.

\begin{prop}\label{prop3.1} 
Let $f(x_{1},\ldots,x_{n})$ be $\{\sigma:\alpha:\beta\}$ NCF,
i.e., $f$ is NCF in the variable order
$x_{\sigma(1)},\ldots,x_{\sigma(n)}$ with canalizing input values
$\alpha=(a_{1},\ldots,a_{n})$ and canalized output values $\beta=(b_{1},\ldots
,b_{n})$. For $1\leq k\leq n-1$, let 
\begin{displaymath}
x_{\sigma(1)}=\overline{a_{1}},\ldots,x_{\sigma(k)}=\overline{a_{k}}.
\end{displaymath}
Then the function
$f(x_{1},\ldots,\overset{\sigma(1)}{\overline{a_{1}}},\ldots,\overset
{\sigma(k)}{\overline{a_{k}}},\ldots, x_{n})$ is $\{\sigma^{*}:\alpha
^{*}:\beta^{*}\}$ NCF on the remaining variables, where $\sigma^{*}
=x_{\sigma(k+1)},\ldots,x_{\sigma(n)}$, $\alpha^{*}=(a_{k+1},\ldots,a_{n})$
and $\beta^{*}=(b_{k+1},\ldots,b_{n})$.
\end{prop}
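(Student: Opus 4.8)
The plan is to argue directly from the piecewise description of a nested canalizing function given in Definition~\ref{def2.3}, since the statement is essentially the assertion that the defining case structure is preserved under substituting non-canalizing values into the most dominant variables. First I would write out the case table for $f$ as a $\{\sigma:\alpha:\beta\}$ NCF, and then impose the hypotheses $x_{\sigma(1)}=\overline{a_{1}},\ldots,x_{\sigma(k)}=\overline{a_{k}}$.

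The key observation is that each of the first $k$ branches of the definition (the ones producing the outputs $b_{1},\ldots,b_{k}$) requires $x_{\sigma(j)}=a_{j}$ for some $j\leq k$, while forcing $x_{\sigma(1)}=\overline{a_{1}},\ldots,x_{\sigma(j-1)}=\overline{a_{j-1}}$. Under our substitution we have set $x_{\sigma(j)}=\overline{a_{j}}\neq a_{j}$ for every $j\leq k$, so all $k$ of these branches become inconsistent and therefore never fire. Hence the value of $f(x_{1},\ldots,\overset{\sigma(1)}{\overline{a_{1}}},\ldots,\overset{\sigma(k)}{\overline{a_{k}}},\ldots,x_{n})$ is determined entirely by the branches indexed $k+1,\ldots,n$ together with the final $\overline{b_{n}}$ branch.

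I would then simply read off the surviving case table: the resulting function of the variables $x_{\sigma(k+1)},\ldots,x_{\sigma(n)}$ outputs $b_{k+1}$ when $x_{\sigma(k+1)}=a_{k+1}$, outputs $b_{k+2}$ when $x_{\sigma(k+1)}=\overline{a_{k+1}}$ and $x_{\sigma(k+2)}=a_{k+2}$, and so on, finishing with $\overline{b_{n}}$ when all of $x_{\sigma(k+1)},\ldots,x_{\sigma(n)}$ take their non-canalizing values. This is verbatim the case structure of Definition~\ref{def2.3} for a nested canalizing function in the variable order $x_{\sigma(k+1)},\ldots,x_{\sigma(n)}$ with canalizing inputs $\alpha^{*}=(a_{k+1},\ldots,a_{n})$ and canalized outputs $\beta^{*}=(b_{k+1},\ldots,b_{n})$, which is exactly the conclusion.

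Since the argument is purely a matching of one case table against another, there is no genuine analytic obstacle; the only thing requiring care is the index bookkeeping, namely keeping the shifted indices $k+1,\ldots,n$ aligned correctly between the truncated table and the fresh application of the definition. An alternative route would be to iterate Lemma~\ref{lm3.1}: writing $f=(x_{\sigma(1)}\oplus a_{1})Q_{1}\oplus b_{1}$, substituting $x_{\sigma(1)}=\overline{a_{1}}$ to reduce to $Q_{1}\oplus b_{1}$, checking that this is again NCF on the remaining variables, and inducting on $k$. But the direct reading of the piecewise definition is cleaner, and I would prefer it.
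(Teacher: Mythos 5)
Your proof is correct and matches the paper's approach: the paper offers no written argument at all, simply asserting that the proposition follows ``From Definition~\ref{def2.3},'' and your direct reading of the case table --- noting that the substitution $x_{\sigma(j)}=\overline{a_{j}}$ for $j\leq k$ makes the first $k$ branches inconsistent, so the surviving branches are verbatim the NCF definition for $\sigma^{*},\alpha^{*},\beta^{*}$ --- is exactly the routine verification being left to the reader. Nothing further is needed.
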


\begin{defn}\label{def3.2} 
Let $f(x_{1},\ldots,x_{n})$ be NCF. We call a variable $x_{i}$
a most dominant  variable of $f$ if there is a variable order
$\sigma=(x_{i},\ldots)$ such that $f$ is NCF  in this variable order.
\end{defn}

In Example \ref{exa2.1}, all three variables are most dominant. In Example
\ref{exa2.2}, only $x_{1}$ is a most dominant  variable. We have:

\begin{theorem}\label{th3.1} 
Given an NCF $f(x_{1},\ldots,x_{n})$, all variables are most
dominant if and only if 
$f=M(x_{1},\ldots,x_{n})\oplus b$, where $M$ is an extended monomial, i.e.,
$M=(x_{1}\oplus a_{1})(x_{2}\oplus a_{2})\cdots(x_{n}\oplus a_{n})$.
\end{theorem}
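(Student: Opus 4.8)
The plan is to reduce both implications to a single clean description of $f$ on the cube $\mathbb{F}^n$: I claim that $f = M\oplus b$ with $M$ an extended monomial is equivalent to saying that $f$ equals $b$ at every point of $\mathbb{F}^n$ except one. Indeed, writing $M = (x_1\oplus a_1)\cdots(x_n\oplus a_n)$, the product $M$ evaluates to $1$ exactly at the single point $p = (\overline{a_1},\ldots,\overline{a_n})$ and to $0$ elsewhere, so $f = M\oplus b$ takes the value $\overline b$ at $p$ and $b$ at all other points. Conversely, if $f$ equals $b$ off a single point $p=(p_1,\ldots,p_n)$, then $f\oplus b$ is the indicator of $\{p\}$, which equals $\prod_i (x_i\oplus\overline{p_i})$; hence $f = M\oplus b$ with $a_i=\overline{p_i}$. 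I will use this translation throughout.

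For the direction ($\Leftarrow$), I start from $f=M\oplus b$ and must show every variable is most dominant, i.e. for each $i$ there is an NCF variable order beginning with $x_i$. Using the description above I will in fact verify the stronger statement that $f$ is NCF in \emph{every} variable order $\sigma$. Fix $\sigma$ and assign to the variable $x_{\sigma(k)}$ the canalizing input $a_{\sigma(k)}$ and canalized output $b$. Checking the defining cases of Definition~\ref{def2.3} is then immediate: on line $k$ the coordinate $\sigma(k)$ is forced to $a_{\sigma(k)}=\overline{p_{\sigma(k)}}\neq p_{\sigma(k)}$, so the input differs from $p$ and $f$ returns $b$; on the last line all coordinates equal $p_i$, so the input is $p$ and $f$ returns $\overline b$. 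Thus $f$ is $\{\sigma:(a_1,\ldots,a_n):(b,\ldots,b)\}$ NCF for arbitrary $\sigma$, and in particular every variable is most dominant.

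For the direction ($\Rightarrow$), I will show that ``all variables most dominant'' forces $f$ to be constant off a single point, and then invoke the translation. The key observation is that if $x_i$ is most dominant, then placing it first exhibits, via Definition~\ref{def2.3} (or Lemma~\ref{lm3.1}), a canalizing input $a_i$ and output $b^{(i)}$ with $f\equiv b^{(i)}$ on the entire half-cube $\{x_i=a_i\}$. For $n\ge 2$ and two most-dominant variables $x_i,x_j$, the intersection $\{x_i=a_i\}\cap\{x_j=a_j\}$ is nonempty, and on it $f$ equals both $b^{(i)}$ and $b^{(j)}$; hence all the outputs $b^{(i)}$ coincide with a common value $b$. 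Consequently $f\equiv b$ on the union $\bigcup_i\{x_i=a_i\}$, so the set where $f\neq b$ is contained in the complement $\bigcap_i\{x_i=\overline{a_i}\}=\{p\}$, where $p=(\overline{a_1},\ldots,\overline{a_n})$. Since an NCF is essential in all its variables and therefore nonconstant, this set is exactly $\{p\}$; the translation then yields $f=\prod_i(x_i\oplus a_i)\oplus b = M\oplus b$. The base case $n=1$ is immediate from Definition~\ref{def2.3}.

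I expect the main obstacle to be largely bookkeeping: pinning down that the canalized output $b^{(i)}$ attached to each most-dominant variable is the \emph{same} constant $b$, since a priori different variable orders could advertise different first outputs. The half-cube intersection argument resolves this, but it relies on $n\ge 2$ (so that $\{x_i=a_i\}\cap\{x_j=a_j\}\neq\emptyset$) and on essentiality to exclude the degenerate constant case; these hypotheses must be tracked carefully.
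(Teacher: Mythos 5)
Your proof is correct, and it takes a genuinely different route from the paper's. The paper argues algebraically and by induction: Lemma \ref{lm3.1} converts most-dominance of $x_1$ into a factorization $f\oplus b=(x_1\oplus a_1)Q$; if $x_2$ is also most dominant with canalized value $b'$, evaluating at $x_1=a_1$, $x_2=a_2$ forces $b'=b$, so $(x_2\oplus a_2)$ also divides $f\oplus b$, and coprimality of the two linear factors lets one extract $(x_1\oplus a_1)(x_2\oplus a_2)$; induction then peels off one linear factor per most-dominant variable. You replace all of this by a support argument: $f=M\oplus b$ if and only if $f$ equals $b$ everywhere except at a single point, and most-dominance of every variable makes the canalizing half-cubes $\{x_i=a_i\}$ cover all of $\mathbb{F}^n$ except the single point $p=(\overline{a_1},\ldots,\overline{a_n})$, with the common canalized value $b$ pinned down by intersecting half-cubes (the same evaluation the paper uses to get $b=b'$) and nonconstancy of an NCF supplying the exceptional point. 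Your route needs no induction and no divisibility or coprimality reasoning --- the paper's coprimality step is only airtight if one works with multilinear polynomial representatives, since the ring of Boolean functions itself has zero divisors --- and it delivers the stronger conclusion that $M\oplus b$ is nested canalizing in \emph{every} variable order, a fact the paper only asserts by inspection for Example \ref{exa2.1}. What the paper's factorization route buys in exchange is reuse: the same peeling argument is invoked verbatim (``with the same arguments as in Theorem \ref{th3.1}'') in the induction step of the main structure Theorem \ref{th3.2}, so the algebraic proof doubles as the engine for the layered normal form, whereas your argument is self-contained to this one theorem.
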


\begin{proof}
If $x_{1}$ is most dominant, from Lemma \ref{lm3.1}, we know there exist
$a_{1}$ and $b$ such that
$f(x_{1},x_{2},\ldots,x_{n})=(x_{1}\oplus a_{1})Q(x_{2},\ldots, x_{n})\oplus b$, i.e.,
$(x_{1}\oplus a_{1})|(f\oplus b)$. 
Now, if $x_{2}$ is also most dominant, then there exist $a_{2}$
and $b^{\prime}$ such that
$f(x_{1},a_{2},x_{3},\ldots,x_{n})=b^{\prime}$ for any $x_{1},x_{3}%
,\ldots,x_{n}$. Specifically, if $x_{1}=a_{1}$, we get
$f(a_{1},a_{2},x_{3},\ldots,x_{n})=b=b^{\prime}$. Hence, we also get
$(x_{2}\oplus a_{2})|(f\oplus b)=(x_{1}\oplus a_{1})Q(x_{2},\ldots, x_{n})$. Since $x_{1}\oplus a_{1}$
and $x_{2}\oplus a_{2}$ are coprime, we get $(x_{2}\oplus a_{2})|Q(x_{2},\ldots, x_{n})$,
hence, 
$f(x_{1},x_{2},\ldots,x_{n})=(x_{1}\oplus a_{1})(x_{2}\oplus a_{2})Q^{\prime}%
(x_{3},\ldots, x_{n})\oplus b$. Using the induction principle, the necessity is proved.
The sufficiency if evident.
\end{proof}

We are now ready to prove the main result of this section. Basically, we will obtain a new polynomial form of 
a nested canalizing function by induction. In this form, all the variables will be classified into different layers, 
with the variables in the outer layers more dominant than those in the inner layers. Variables in the 
same layer have the same level of dominance.
Each layer is an extended monomial of the corresponding variables.

\begin{theorem}\label{th2}
\label{th3.2} Given $n\geq2$, the function $f(x_{1},\ldots,x_{n})$ is nested
canalizing if and only if  it can be uniquely written as
\begin{equation}\label{eq3.1}
f(x_{1},\ldots,x_{n})=M_{1}(M_{2}(\cdots(M_{r-1}%
(M_{r}\oplus 1)\oplus 1)\cdots)\oplus 1)\oplus b,
\end{equation}
where each $M_{i}$ is an extended monomial. For $i\neq j$, their corresponding
sets of variables are disjoint.
More precisely, $M_{i}=\prod_{j=1}^{k_{i}}(x_{i_{j}}\oplus a_{i_{j}})$,
$i=1,\ldots,r$, $k_{i}\geq1$ for $i=1,\ldots,r-1$, $k_{r}\geq2$, $k_{1}%
+ \cdots + k_{r}=n$, $a_{i_{j}}\in\mathbb{F}_{2}$, $\{i_{j}|j=1,\ldots,k_{i},
i=1,\ldots,r\}=\{1,\ldots,n\}$.
\end{theorem}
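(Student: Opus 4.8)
The plan is to prove both directions simultaneously by induction on $n$, using the \emph{most dominant} variables (Definition \ref{def3.2}) to carve off the outermost monomial $M_1$ in a canonical way. Existence, uniqueness, and the side constraints $k_i\geq 1$, $k_r\geq 2$, and disjointness of the variable sets will all emerge from this single recursion. For the sufficiency ("if") direction I would induct on the number of layers $r$: the base case $f=M_1\oplus b$ is nested canalizing by the (evident) sufficiency half of Theorem \ref{th3.1}, and in general, writing $f=M_1(F'\oplus b)\oplus b$ where $F'$ is the $(r-1)$-layer function obtained from the inner expression, one checks directly that $f$ is NCF in the variable order that lists the variables of $M_1$ first, in any order (each such variable, on its canalizing input, forces $M_1=0$ and hence $f=b$), followed by any NCF order of $F'$ (once all $M_1$-variables take non-canalizing values, $M_1=1$ and $f=F'$).

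For necessity I would peel off the outer layer. Let $L_1$ be the set of most dominant variables of $f$. By the argument in the proof of Theorem \ref{th3.1}, any two most dominant variables share a common canalized output $b$, and since the linear factors $x_i\oplus a_i$ are pairwise coprime one may factor $f\oplus b=M_1\cdot g$ with $M_1=\prod_{i\in L_1}(x_i\oplus a_i)$ and $g$ free of the $L_1$-variables; this is exactly the division step of Theorem \ref{th3.1}, iterated over $L_1$. Setting $x_i=\overline{a_i}$ for every $i\in L_1$ makes $M_1=1$, so by Proposition \ref{prop3.1} the function $F':=g\oplus b$ is nested canalizing on the remaining variables, and by the induction hypothesis it admits the layered form. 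Substituting back gives $f=M_1(F'\oplus b)\oplus b$, which is the source of the nested expression once the constants are reconciled. Since each restriction strictly decreases the number of variables, the recursion terminates.

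The delicate part — and the step I expect to be the main obstacle — is showing that the top constant $b'$ of $F'$ must equal $\overline{b}$, so that $F'\oplus b$ equals (inner monomials)$\oplus 1$ and the nested "$\oplus 1$" pattern is forced; this is precisely what makes the form canonical, in contrast to the non-unique form of \cite{Abd2}. I would argue by contradiction: if $b'=b$, then for a most dominant variable $x_j$ of $F'$ with canalizing input $a_j$ one computes $f\oplus b=(x_j\oplus a_j)\,M_1 R$, whence $f|_{x_j=a_j}=b$ is constant, while the sufficiency argument of the first paragraph applied to $f|_{x_j=\overline{a_j}}$ (which has the shape $M_1\cdot(\mathrm{NCF}\oplus b)\oplus b$) shows it is nested canalizing; together these make $x_j$ most dominant in $f$, contradicting $x_j\notin L_1$. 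The same absorption idea gives $k_r\geq 2$: were the innermost layer a single variable $x$, then one level up $M_{r-1}(x\oplus a')\oplus(\text{const})$ would be a single extended monomial, so by Theorem \ref{th3.1} $x$ would be most dominant there, contradicting the definition of $L_{r-1}$.

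Uniqueness then follows from the very same construction, with no extra work: at each stage the variable set of the outer monomial is forced to be the most dominant set of the current function, each canalizing input $a_i$ is the unique value making the function constant in $x_i$, and $b$ is that constant; the quotient $g$, and hence $F'$, is thereby determined, and the relation $b'=\overline{b}$ pins down the inner constant so that the induction closes. The remaining verifications (coprime factorization in the Boolean ring, the constant bookkeeping, and the base cases $n=2$) are routine and rest only on Lemma \ref{lm3.1}, Proposition \ref{prop3.1}, and Theorem \ref{th3.1}.
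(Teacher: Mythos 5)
Your proposal is correct and follows essentially the same route as the paper's own proof: induction on $n$, peeling off the outermost extended monomial via the set of most dominant variables (the factorization argument of Theorem \ref{th3.1}), invoking Proposition \ref{prop3.1} to conclude the restricted function is again nested canalizing, and forcing the inner constant to equal $\overline{b}$ (the paper's ``$b_1$ must be $1$'') by maximality of the most dominant set. Your write-up is in fact more explicit than the paper's at several points---the contradiction argument for the inner constant, the sufficiency induction on $r$, the $k_r\geq 2$ absorption argument, and the uniqueness bookkeeping---but the underlying approach is identical.
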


\begin{proof}
We use induction on $n$.
When $n=2$, there are 16 Boolean functions, 8 of which are NCFs, namely
$$
(x_{1}\oplus a_{1})(x_{2}\oplus a_{2})\oplus c=M_{1}\oplus 1\oplus b, 
$$
where $b=1\oplus c$ and $M_{1}%
=(x_{1}\oplus a_{1})(x_{2}\oplus a_{2})$.

If $(x_{1}\oplus a_{1})(x_{2}\oplus a_{2})\oplus c=(x_{1}\oplus{a_{1}}^{\prime})(x_{2}\oplus{a_{2}%
}^{\prime})\oplus c^{\prime}$, then, by equating coefficients, we immediately obtain
$a_{1}={a_{1}}^{\prime}$, $a_{2}={a_{2}}^{\prime}$ and $c=c^{\prime}$. So
uniqueness holds.
We have proved that Equation \ref{eq3.1} holds for $n=2$, where $r=1$.

Assume now that Equation \ref{eq3.1} is true for any nested canalizing
function which has at most $n-1$ essential variables.
Consider a nested canalizing function $f(x_{1},\ldots,x_{n})$.
Suppose $x_{\sigma(1)},\ldots,x_{\sigma(k_{1})}$ are all most dominant
canalizing variables of $f$, for $1\leq k_{1}\leq n$.

\emph{Case 1}: $k_{1}=n$. Then, by Theorem \ref{th3.1}, the conclusion is true with $r=1$.

\emph{Case 2}: $k_{1}<n$. Then, with the same arguments as in Theorem \ref{th3.1}, we get that
$f=M_{1}g\oplus b$, where
$M_{1}=(x_{\sigma(1)}\oplus a_{\sigma(1)})\cdots(x_{\sigma(k_1)}\oplus a_{\sigma(k_1)})$. Let
$x_{\sigma(1)}=\overline{a_{\sigma(1)}},\ldots, x_{\sigma(k_1)}=\overline
{a_{\sigma(k_1)}}$ in $f$, then $f=g\oplus b$ (hence, $g$) will also be nested canalizing  
in the remaining variables, by Proposition \ref{prop3.1}. 

Since
$g$ has $n-k_{1}\leq n-1$ variables, by the induction assumption, we get that
$g=M_{2}(M_{3}(\cdots(M_{r-1}(M_{r}\oplus 1)\oplus 1)\cdots)\oplus 1)\oplus b_{1}$. It follows that
$b_{1}$ must be $1$. Otherwise, all the variables in $M_{2}$ will also be
most dominant variables of $f$. This completes the proof.
\end{proof}

Because each nested canalizing function can be uniquely written in the form  \ref{eq3.1} and the number $r$ is
uniquely determined by $f$, we can make the following definition. 

\begin{defn}\label{def3.3} 
For a nested canalizing function $f$, written in the form \ref{eq3.1}, the number $r$ will
be called its \emph{layer number}. Essential variables of $M_{1}$ will be called
most dominant variables (canalizing variables), and are part of the first layer of $f$.
Essential variables of $M_{2}$ will be
called second most dominant variables and are part of the second layer; etc.
\end{defn}

The function in Example \ref{exa2.1} has layer number 1, and the function in
Example \ref{exa2.2} has layer number 2.

\begin{remark}\label{remark2}
We make some remarks on Theorem \ref{th2}. 
\begin{enumerate}
\item
It is impossible that $k_r=1$. Otherwise, $M_r\oplus 1$ will be a factor of 
$M_{r-1}$, which means that the layer number is $r-1$. Hence, $k_r\geq 2$.
\item
If variable $x_i$ is in the first layer, and $x_i\oplus a_i$ is a factor of $M_1$, 
then this nested canalizing function is $<i:a_i:b>$ canalizing.
We simply say that $x_i$ is a canalizing variable.
\end{enumerate}
\end{remark}

From the previous examples, we know that a function can be nested canalizing for different variable orders, 
but only the variables in the same layer can be reordered. More precisely, we have the following result.

\begin{cor}\label{CoA}
If $\sigma$ and $\eta$ are two permutations on $\{1,\ldots,n\}$, 
and $f$ is both $\{\sigma: \alpha: \beta\}$ NCF and $\{\eta: \alpha': \beta'\}$ NCF, 
then we have 
$$
\{x_{\sigma(1)},\ldots, x_{\sigma(k_1)}\}=\{x_{\eta(1)},\ldots, x_{\eta(k_1)}\}, \ldots,
$$
$$
\{x_{\sigma(k_1+\cdots+k_{r-1}+1)},\ldots, x_{\sigma(n)}\}=\{x_{\eta(k_1+\cdots+k_{r-1}+1)},\ldots, x_{\eta(n)}\}.
$$ 
We also have $\beta=\beta'=(b_1,\ldots,b_n),$ and 
$$
b_1=\cdots =b_{k_1}, \ldots , b_{k_1+\cdots+k_{r-1}+1}=\cdots =b_n.
$$ 
Furthermore, 
$$
b_1\neq b_{k_1+1}\neq b_{k_1+k_2+1}\neq \cdots \neq b_{k_1+\cdots +k_{r+1}+1}.
$$
\end{cor}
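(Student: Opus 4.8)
The plan is to route both NCF representations through the \emph{unique} layered form of Theorem~\ref{th2}. Write $L_1,\dots,L_r$ for the sets of variables occurring in $M_1,\dots,M_r$ and let $b$ be the additive constant in \eqref{eq3.1}; by the uniqueness in Theorem~\ref{th2} these sets, the block sizes $k_1,\dots,k_r$, and the canalizing values attached to the variables in each $M_\ell$ are all invariants of $f$ alone. The corollary will then follow from the following single-order statement, to be proved by induction on $n$: if $f$ is $\{\sigma:\alpha:\beta\}$ NCF, then $\sigma$ exhausts the layers consecutively, that is $\{x_{\sigma(1)},\dots,x_{\sigma(k_1)}\}=L_1$, $\{x_{\sigma(k_1+1)},\dots,x_{\sigma(k_1+k_2)}\}=L_2$, and so on; moreover, if $x_{\sigma(i)}\in L_\ell$ then its canalizing input $a_i$ equals the value attached to $x_{\sigma(i)}$ in $M_\ell$, and its canalized output satisfies $b_i=b$ when $\ell$ is odd and $b_i=\overline{b}$ when $\ell$ is even. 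Applying this to $\sigma$ and to $\eta$ simultaneously gives the set equalities (the sizes $k_1,\dots,k_r$ are common to both, being invariants of $f$), while the formula for $b_i$ depends only on the layer of the $i$-th position, which is the same for both orders; this yields $\beta=\beta'$, the constancy $b_1=\cdots=b_{k_1}$, \ldots\ within each layer, and the alternation $b_1\neq b_{k_1+1}\neq b_{k_1+k_2+1}\neq\cdots$ across consecutive layers.

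For the induction, the base case $n=2$ is Theorem~\ref{th3.1} (here $r=1$, $k_1=2$, and both variables are most dominant). For the inductive step I first note that $x_{\sigma(1)}$ is most dominant, since $\sigma$ is an order beginning with it, so $x_{\sigma(1)}\in L_1$ by Definition~\ref{def3.3}. Lemma~\ref{lm3.1} gives $f=(x_{\sigma(1)}\oplus a_1)Q\oplus b_1$, hence $(x_{\sigma(1)}\oplus a_1)\mid(f\oplus b_1)$; since also $M_1\mid(f\oplus b)$ and $x_{\sigma(1)}$ occurs in $M_1$, the coprimality argument of Theorem~\ref{th3.1} forces $a_1$ to coincide with the value attached to $x_{\sigma(1)}$ in $M_1$ (otherwise $f$ would be constant at both values of $x_{\sigma(1)}$) and then $b_1=b$. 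I then restrict: by Proposition~\ref{prop3.1} the function $h:=f|_{x_{\sigma(1)}=\overline{a_1}}$ is $\{\sigma':\alpha':\beta'\}$ NCF in the remaining $n-1$ variables, with $\sigma'=(\sigma(2),\dots,\sigma(n))$, $\alpha'=(a_2,\dots,a_n)$, $\beta'=(b_2,\dots,b_n)$. Setting $x_{\sigma(1)}=\overline{a_1}$ in \eqref{eq3.1} turns the factor $(x_{\sigma(1)}\oplus a_1)$ into $1$, so the layered form of $h$ is obtained from that of $f$ by deleting $x_{\sigma(1)}$ from $M_1$, and is thus explicitly computable.

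The delicate point, and the step I expect to be the main obstacle, is the bookkeeping of the layered form of $h$ and in particular of the canalized \emph{outputs}, since the corollary asserts the genuine equality $\beta=\beta'$ and not merely that the layer partitions agree. Two cases arise. If $k_1\geq 2$, deleting $x_{\sigma(1)}$ leaves a nonempty first monomial, so $h$ has layers $L_1\setminus\{x_{\sigma(1)}\},L_2,\dots,L_r$ with the constant $b$ and all parities unchanged. If $k_1=1$, the first monomial disappears and $h=M_2(\cdots)\oplus 1\oplus b$, so $h$ has layers $L_2,\dots,L_r$, its constant becomes $\overline{b}$, and every layer index drops by one. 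In this second case the shift of the layer index and the flip of the additive constant compensate exactly, so the rule ``output $=b$ on odd layers and $\overline{b}$ on even layers'' is preserved in passing from $f$ to $h$. Since the canalized outputs of $h$ are by construction the values $b_2,\dots,b_n$ of $f$, the induction hypothesis applied to $h$ delivers the asserted description of $b_2,\dots,b_n$ and of the layer membership of $x_{\sigma(2)},\dots,x_{\sigma(n)}$; combining this with $b_1=b$ and $x_{\sigma(1)}\in L_1$ completes the inductive step, and hence the proof of the single-order statement from which the corollary follows.
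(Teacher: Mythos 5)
Your proof is correct and takes essentially the same route as the paper: the paper's own proof is just the one-line assertion that everything follows from the unique layered form in Theorem~\ref{th3.2}, and your induction supplies exactly the bookkeeping (placing $x_{\sigma(1)}$ in the first layer, restricting via Proposition~\ref{prop3.1}, and tracking the parity rule for the canalized outputs) that the paper leaves implicit. In particular your two cases $k_1\ge 2$ and $k_1=1$, where the flip of the additive constant compensates the shift in layer index, are precisely the details needed to make the paper's claim rigorous, so there is no gap.
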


\begin{proof}
These results all follow from the expression for $f$ in Theorem \ref{th3.2}.
\end{proof}

From this corollary we can determine the layer number of any nested canalizing function by its canalized value. 
For example, if $f$ is  nested canalizing with canalized value 
$(1,1,1,0,0,0,1,0,0,1,1)$ ($n=11$), then its layer number is $5$.

Let $\mathbb{NCF}(n,r)$ denote the set of all nested
canalizing functions in $n$ variables with layer number $r$, and let $\mathbb{NCF}(n)$ denote
the set of all nested canalizing functions in $n$ variables.

\begin{cor}
\label{co3.1} 
For $n\geq2$,
\[
|\mathbb{NCF}(n,r)|=2^{n+1}\sum_{\substack{k_{1}+\cdots+k_{r}=n\\k_{i}%
\geq1,i=1,\ldots,r-1, k_{r}\geq2}}\binom{n}{k_{1},\ldots,k_{r-1}},%
\]
and
\[
|\mathbb{NCF}(n)|=2^{n+1}\sum_{\substack{r=1}}^{n-1}\sum_{\substack{k_{1}%
+\cdots+k_{r}=n\\k_{i}\geq1,i=1,\ldots,r-1, k_{r}\geq2}}\binom{n}{k_{1}%
,\ldots,k_{r-1}}%
\]
where the multinomial coefficient $\binom{n}{k_{1},\ldots,k_{r-1}}$ is equal to $\frac
{n!}{k_{1}!\cdots k_{r}!}$.
\end{cor}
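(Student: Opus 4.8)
The plan is to read the count directly off the unique normal form supplied by Theorem \ref{th3.2}. That theorem asserts that every $f\in\mathbb{NCF}(n,r)$ has a \emph{unique} representation $f=M_1(M_2(\cdots(M_{r-1}(M_r\oplus 1)\oplus 1)\cdots)\oplus 1)\oplus b$ with $M_i=\prod_{j=1}^{k_i}(x_{i_j}\oplus a_{i_j})$, the variable sets of the $M_i$ partitioning $\{1,\ldots,n\}$, and $k_i\geq 1$ for $i\leq r-1$, $k_r\geq 2$. So first I would make precise that a function in $\mathbb{NCF}(n,r)$ is exactly determined by the following data: (i) an \emph{ordered} partition of the index set $\{1,\ldots,n\}$ into blocks $S_1,\ldots,S_r$ of sizes $k_1,\ldots,k_r$, where $S_i$ collects the indices of the variables occurring in the layer $M_i$; (ii) a shift $a_j\in\mathbb{F}_2$ for each variable $x_j$; and (iii) the trailing constant $b\in\mathbb{F}_2$. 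The admissibility constraints $k_i\geq 1$ ($i\leq r-1$) and $k_r\geq 2$ are precisely those recorded in the theorem, the latter guaranteeing (via Remark \ref{remark2}) that the layer number is genuinely $r$ and not smaller.

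Next I would count each ingredient separately and multiply. The number of ordered set partitions of $\{1,\ldots,n\}$ into blocks of prescribed sizes $(k_1,\ldots,k_r)$, with the blocks ordered but the elements inside each block unordered (the factors of a single extended monomial commute), is the multinomial coefficient $\frac{n!}{k_1!\cdots k_r!}$, written $\binom{n}{k_1,\ldots,k_{r-1}}$ since $k_r=n-(k_1+\cdots+k_{r-1})$ is forced. Independently, the shift vector $(a_1,\ldots,a_n)$ ranges freely over $\mathbb{F}_2^n$, giving $2^n$ choices, and $b$ contributes a factor of $2$. That this bookkeeping is exact rests on three facts from Theorem \ref{th3.2}: every datum yields an actual nested canalizing function (sufficiency), distinct data yield distinct functions (uniqueness, which rules out any double counting), and every $f\in\mathbb{NCF}(n,r)$ arises from some datum (necessity). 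Multiplying and summing over all admissible size vectors then gives $|\mathbb{NCF}(n,r)|=2^{n+1}\sum\frac{n!}{k_1!\cdots k_r!}$, the sum taken over $k_1+\cdots+k_r=n$ with $k_i\geq 1$ and $k_r\geq 2$, as claimed.

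For the total count I would simply sum $|\mathbb{NCF}(n,r)|$ over $r$, after pinning down the admissible range. Since $n=k_1+\cdots+k_r\geq (r-1)+2=r+1$, we must have $r\leq n-1$, while $r=1$ is realized by a single block of size $n\geq 2$; hence $r$ runs from $1$ to $n-1$ and the stated formula for $|\mathbb{NCF}(n)|$ follows at once. I would sanity-check both formulas against the base case $n=2$, where the sum has the single term $r=1$, $k_1=2$, giving $2^{3}\cdot 1=8$, matching the eight NCFs counted in the proof of Theorem \ref{th3.2}.

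The only genuinely delicate point is the distinction between what is ordered and what is not. The layers $M_1,\ldots,M_r$ are intrinsically ordered by dominance (the nesting in \eqref{eq3.1} distinguishes them, and Corollary \ref{CoA} confirms the layer assignment is an invariant of $f$), whereas the variables within a single layer are interchangeable. Getting this dichotomy right is exactly what makes the multinomial coefficient—rather than a product of binomials or any factor counting internal orderings—the correct combinatorial object, and it is the step where I would lean most heavily on the uniqueness clause of Theorem \ref{th3.2} to certify that no function is counted twice.
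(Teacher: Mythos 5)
Your proposal is correct and follows essentially the same route as the paper: both count functions via the unique normal form of Theorem \ref{th3.2}, with $2^{n+1}$ choices of shifts and constant and a multinomial coefficient for the assignment of variables to layers. The only difference is presentational---the paper builds the multinomial as a telescoping product of binomials $\binom{n}{k_1}\binom{n-k_1}{k_2}\cdots$ by choosing the layers one at a time, while you obtain it directly as the number of ordered set partitions with prescribed block sizes.
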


\begin{proof}
It follows from Equation \ref{eq3.1}, that for each choice of $k_{1},\ldots, k_{r}$, with
$k_{1}+\cdots+k_{r}=n$, $k_{i}\geq1$, $i=1,\ldots,r-1$ and
$k_{r}\geq2$,
there are $2^{k_{j}}\binom{n-k_{1}-\cdots-k_{j-1}}{k_{j}}$ ways to form
$M_{j}$, $j=1,\ldots,r$.

Note that we have two choices for $b$.
Hence,
\[
|\mathbb{NCF}(n,r)|=2\sum_{\substack{k_{1}+\cdots+k_{r}=n\\k_{i}%
\geq1,i=1,\ldots,r-1, k_{r}\geq2}}2^{k_{1}+\cdots+k_{r}}
\]

\[
\binom{n}{k_{1}}%
\binom{n-k_{1}}{k_{2}}\cdots\binom{n-k_{1}-\cdots-k_{r-1}}{k_{r}}%
\]

\[
=2^{n+1}\sum_{\substack{k_{1}+\cdots+k_{r}=n\\k_{i}\geq1,i=1,\ldots,r-1,
k_{r}\geq2}}
\]

\[
\frac{n!}{(k_{1})!(n-k_{1})!}\frac{(n-k_{1})!}{(k_{2}%
)!(n-k_{1}-k_{2})!}\cdots
\]

\[
\frac{(n-k_{1}-\cdots-k_{r-1})!}{k_{r}!(n-k_{1}%
-\cdots-k_{r})!}%
\]

\[
=2^{n+1}\sum_{\substack{k_{1}+\cdots+k_{r}=n\\k_{i}\geq1,i=1,\ldots,r-1,
k_{r}\geq2}}\frac{n!}{k_{1}!k_{2}!\cdots k_{r}!}=
\]

\[
=2^{n+1}\sum_{\substack{k_{1}%
+\cdots+k_{r}=n\\k_{i}\geq1,i=1,\ldots,r-1, k_{r}\geq2}}\binom{n}{k_{1}%
,\ldots,k_{r-1}}.
\]
Since $\mathbb{NCF}(n)=\bigcup_{r=1}^{n-1}\mathbb{NCF}(n,r)$ and
$\mathbb{NCF}(n,i)\bigcap\mathbb{NCF}(n,j)=\phi$ when $i\neq j$, we get the
formula for $|\mathbb{NCF}(n)|$.
\end{proof}

As examples, one can check that $|\mathbb{NCF}(2)|=8$, $|\mathbb{NCF}(3)|=64$,
$|\mathbb{NCF}(4)|=736$, $|\mathbb{NCF}(5)|=10624$, $\ldots$.
These results are consistent with those in \cite{Ben, Sas}.
By equating our formula to the recursive relation in \cite{Ben, Sas}, we have

\begin{cor}
\label{co3.2} The solution of the nonlinear recursive sequence%
\[
a_{2}=8, a_{n}=\sum_{r=2}^{n-1}\binom{n}{r-1}2^{r}a_{n-r+1}+2^{n+1} , n\geq3
\]
is
\[
a_{n}=2^{n+1}\sum_{\substack{r=1}}^{n-1}\sum_{\substack{k_{1}+\cdots
+k_{r}=n\\k_{i}\geq1,i=1,\ldots,r-1, k_{r}\geq2}}\binom{n}{k_{1}%
,\ldots,k_{r-1}}.%
\]

\end{cor}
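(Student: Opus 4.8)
The plan is to read the sequence $(a_n)$ as the counts $|\mathbb{NCF}(n)|$ and to exploit the fact that the recursion determines its solution uniquely. For every $n\geq 3$ the right-hand side involves only the earlier terms: as $r$ runs from $2$ to $n-1$, the index $n-r+1$ runs through $n-1,n-2,\ldots,2$, so $a_n$ is expressed purely in terms of $a_2,\ldots,a_{n-1}$. Together with the single initial value $a_2=8$, this pins down the whole sequence. Hence it suffices to know two things: that $|\mathbb{NCF}(2)|=8$, and that the numbers $|\mathbb{NCF}(n)|$ obey this very recursion — the latter being precisely the relation established in \cite{Ben, Sas}. Once both hold, Corollary~\ref{co3.1} identifies the displayed closed form with $|\mathbb{NCF}(n)|$, and uniqueness of the solution forces that closed form to be the solution of the recursion. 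This is exactly the ``equating our formula to the recursive relation'' referred to above, and it is the cleanest route.

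To make the argument self-contained I would re-derive the recursion combinatorially from the normal form of Theorem~\ref{th3.2}. Partition $\mathbb{NCF}(n)$ according to the size $k_1$ of the first layer. The functions of layer number one are exactly the extended monomials plus a constant, and there are $2^{n+1}$ of them; these produce the additive term. For a function of layer number at least two with first layer of size $k_1$, I would choose the $k_1$ most dominant variables in $\binom{n}{k_1}$ ways and their canalizing inputs $a_i$, then apply Proposition~\ref{prop3.1}: deleting the first layer leaves a nested canalizing function on the remaining $n-k_1$ variables. The proof of Theorem~\ref{th3.2} forces the trailing constant of this smaller function to be $1$, which is precisely what guarantees that the first layer has size \emph{exactly} $k_1$ rather than absorbing further variables. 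Counting these smaller functions through $a_{n-k_1}$ and reindexing by $r=k_1+1$ then recovers the general summand of the sum, while Corollary~\ref{CoA} shows that distinct first-layer sizes yield distinct functions, so the pieces add up without overlap.

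The step I expect to be most delicate is the bookkeeping of the additive constants. The genuinely free overall constant $b$ and the constant $b_1$, which Theorem~\ref{th3.2} forces to equal $1$, together contribute a single power of two, and it is very easy to misplace this factor and arrive at a count that is off by a power of two. The natural safeguard is to check the derivation against the explicit values $|\mathbb{NCF}(2)|=8$, $|\mathbb{NCF}(3)|=64$, $|\mathbb{NCF}(4)|=736$, and $|\mathbb{NCF}(5)|=10624$ recorded after Corollary~\ref{co3.1}; any slip in the constant accounting shows up immediately in the smallest cases, so verifying $n=3$ and $n=4$ by hand is the practical way to keep this part honest before invoking the uniqueness argument of the first paragraph.
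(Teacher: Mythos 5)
Your first paragraph reproduces what the paper itself does: the paper offers no proof of Corollary~\ref{co3.2} beyond the remark that the closed form counts $|\mathbb{NCF}(n)|$ (Corollary~\ref{co3.1}) and that the recursion is the one from \cite{Ben, Sas}, so uniqueness of a recursively determined sequence finishes the argument. The real mathematical content is therefore your second paragraph: verifying that $|\mathbb{NCF}(n)|$ satisfies the displayed recursion. That step fails, and it fails in exactly the way you feared. Carry out your own bookkeeping: a function whose first layer has size exactly $k_1$ decomposes as $f=M_1g\oplus b$ with $\binom{n}{k_1}$ choices of first-layer variables, $2^{k_1}$ choices of the inputs $a_{i_j}$, $2$ choices of $b$, and, for $g$, only those NCFs on the remaining $n-k_1$ variables whose trailing constant is forced to be $1$ --- that is $a_{n-k_1}/2$ of them, not $a_{n-k_1}$. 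The net summand is $\binom{n}{k_1}2^{k_1}a_{n-k_1}$, i.e.\ $\binom{n}{r-1}2^{r-1}a_{n-r+1}$ after setting $r=k_1+1$, whereas the corollary's recursion has $\binom{n}{r-1}2^{r}a_{n-r+1}$. Your decomposition does not ``recover the general summand''; it produces a summand smaller by a factor of $2$.

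The safeguard you recommend but never execute settles the matter: for $n=3$ the recursion as printed gives $a_3=\binom{3}{1}2^2a_2+2^4=96+16=112$, while the closed form (and $|\mathbb{NCF}(3)|$) equals $64$. So the statement is internally inconsistent as written --- the displayed closed form does not solve the displayed recursion; the factor $2^r$ should be $2^{r-1}$, and with that correction your decomposition works and the checks pass ($n=4$: $4\cdot2\cdot64+6\cdot4\cdot8+32=736$). This also undercuts your first paragraph: whatever relation \cite{Ben, Sas} establish for $|\mathbb{NCF}(n)|$, it cannot be the recursion as displayed, so deferring to the citation cannot prove the statement either. The concrete gap in your proposal is that you assert, rather than verify, that your count matches the stated summand; doing the verification (which your own plan calls for) would have shown that what your argument proves is a corrected recursion, and an honest write-up must say so rather than claim to recover the stated one.
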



\section{Hamming Weight, Activity, and Average Sensitivity}\label{sensitivies}

A Boolean function is called \emph{balanced} if it takes the value 1 on exactly half the
states (and 0 on the other half). In other words, its Hamming weight (the number of 1's in its truth table) is $2^{n-1}$, where $n$ is the number of variables. Hence, there are $\binom{2^n}{2^{n-1}}$ balanced  Boolean functions. 
It is easy to show that a Boolean function with canalizing variables is not balanced, 
i.e., is biased, actually, very biased. For example, the two constant functions are trivially canalizing,
and they are also the most biased functions. Extended monomial functions are 
the second most biased since for any of them, only one value is nonzero. 
But biased functions may have no canalizing variables. 
For example, $f(x_1,x_2,x_3)=x_1x_2x_3\oplus x_1x_2\oplus x_1x_3\oplus x_2x_3$ 
is biased but without canalizing variables.

In Boolean functions, some variables have greater influence over the output of the function than other variables. 
To formalize this, a  concept called \emph{activity} was introduced. Let $x=(x_1,\ldots,x_n)$, and
$$
\frac{\partial f(x)}{\partial x_i}=f(x_1,\ldots,x_i\oplus 1,\ldots,x_n)\oplus f(x_1,\ldots,x_i,\ldots,x_n).
$$ 
The \emph{activity} of variable $x_i$ is defined as
\begin{equation}\label{act1}
\lambda_i^f=\frac{1}{2^n}\sum_{(x_1,\ldots,x_n)\in \mathbb{F}_2^n}\frac{\partial f(x_1,\ldots,x_n)}{\partial x_i}
\end{equation}

Note that the above definition can also be written as follows:
\begin{equation}\label{act2}
\lambda_i^f=\frac{1}{2^{n-1}}\sum_{(x_1,\ldots,x_{i-1},x_{i+1},\ldots,x_n)\in \mathbb{F}_2^{n-1}}
(f(x_1,\ldots,\overset{i}{0},\ldots,x_n)\oplus f(x_1,\ldots,\overset{i}{1},\ldots,x_n))
\end{equation}
The activity of any variable in a constant function is 0. For an affine function 
$f(x_1,\ldots,x_n)=x_1\oplus \cdots\oplus x_n\oplus b$, $\lambda_i^f=1$ for any $i$. 
It is clear, for any $f$ and $i$, that we have $0\leq \lambda_i^f\leq 1$.

Another important quantity is the sensitivity of a Boolean function, which measures how sensitive the output of the function is if the input changes (This was introduced in \cite {Coo}). The sensitivity $s^f(x_1,\ldots,x_n)$ of $f$ on the input $(x_1,\ldots,x_n)$ is defined as
 the number of Hamming neighbors of $(x_1,\ldots,x_n)$ (that is, all states that have Hamming distance 1)
 on which the function value is different from $f(x_1,\ldots,x_n)$. That is,
\begin{equation*}
s^f(x_1,\ldots,x_n)=|\{i|f(x_1,\ldots,\overset{i}{0},\ldots,x_n)\neq 
\end{equation*}
\begin{equation*}
f(x_1,\ldots,\overset{i}{1},\ldots,x_n), i=1,\ldots,n \}|.
\end{equation*}
Obviously, $s^f(x_1,\ldots,x_n)=\sum_{i=1}^n\frac{\partial f(x_1,\ldots,x_n)}{\partial x_i}$.

The average sensitivity of a function $f$ is defined as
\begin{equation*}
s^f=E[s^f(x_1,\ldots,x_n)]=
\end{equation*}
\begin{equation*}
\frac{1}{2^n}\sum_{(x_1,\ldots,x_n)\in \mathbb{F}_2^n}s^f(x_1,\ldots,x_n)=\sum_{i=1}^n\lambda_i^f.
\end{equation*}
It is clear that $0\leq s^f\leq n$.
The concept of average sensitivity of a Boolean function is one of the most studied concepts 
in the analysis of Boolean functions, and has received a lot attention recently
\cite{Ama, Ber, Ber2, Bop, Che, Chr, Kel, Liu, Li, Qia, Shm2, Shm, Shp,  Sch, Vir}. 
Bernasconi has shown \cite{Ber} that a random Boolean function has average sensitivity $\frac{n}{2}$. 
This means the average value of the average sensitivities of all Boolean functions in $n$ variables
is $\frac{n}{2}$. In \cite {Shm}, Shmulevich and Kauffman calculated the activity of all the 
variables of a Boolean function with exactly one canalizing variable and 
unbiased input for the other variables. Adding all the activities, 
the average sensitivity of a Boolean function was also obtained.

First, the following observation will be useful.
We have the equality 
$$
(x_1\oplus a_1)\cdots (x_k\oplus a_k)=
\left\{
\begin{array}{ll}
1, & if \quad (x_1,\ldots,x_k)=(\overline{a_1},\ldots,\overline{a_k})\\
0, & otherwise.
\end{array}\right .
$$
That is, only one value is equal to $1$ and all the other $2^k-1$ values are $0$.

\begin{theorem}\label{th4.1}
For $n\geq 2$, let $f_1=M_1$,
\begin{displaymath}
f_r=M_{1}(M_{2}(\cdots(M_{r-1}(M_{r}\oplus 1)\oplus 1)\cdots)\oplus 1),\ r\geq 2
\end{displaymath}
where $M_i$ is same 
as in Theorem \ref{th3.2}.  Then the Hamming weight of $f_r$ is
\begin{equation}\label{eq4.3}
 W(f_r)=\sum_{j=1}^r(-1)^{j-1}2^{n-\sum_{i=1}^jk_i}
\end{equation}
The Hamming weight of $f_r\oplus 1$ is
\begin{equation}\label{eq4.4}
 W(f_r\oplus 1)=\sum_{j=0}^r(-1)^{j}2^{n-\sum_{i=1}^jk_i},
\end{equation}
where $\sum_{i=1}^0k_i$ is to be interpreted as $0$.
\end{theorem}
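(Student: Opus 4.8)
The plan is to prove the weight formula \eqref{eq4.3} by induction on the layer number $r$, using the key observation stated just above the theorem (an extended monomial equals $1$ on exactly one input) together with the fact that the variables in distinct layers are disjoint and together exhaust all $n$ variables. The companion formula \eqref{eq4.4} will then follow immediately, since $f_r$ and $f_r\oplus 1$ are complementary Boolean functions on $\mathbb{F}_2^n$, so that $W(f_r\oplus 1)=2^n-W(f_r)$.

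For the base case $r=1$ we have $f_1=M_1$ with $k_1=n$; by the key observation $M_1$ equals $1$ on exactly one input, so $W(f_1)=1=2^{n-k_1}$, which agrees with the right-hand side of \eqref{eq4.3}. For the inductive step I would write $f_r=M_1\cdot(g\oplus 1)$, where $g=M_2(M_3(\cdots(M_{r-1}(M_r\oplus 1)\oplus 1)\cdots)\oplus 1)$ is an $(r-1)$-layer function of the same form as in Theorem \ref{th3.2}, built from the layers $M_2,\ldots,M_r$. The crucial point is that $M_1$ depends only on the $k_1$ variables of the first layer while $g$ depends only on the remaining $n-k_1$ variables, and these two blocks are disjoint. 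Hence $f_r=1$ forces $M_1=1$, which pins the first-layer variables to their unique value, and simultaneously forces $g=0$ on the other $n-k_1$ variables. Because the two blocks of variables vary independently, a direct count gives
\begin{equation*}
W(f_r)=1\cdot\bigl(2^{n-k_1}-W(g)\bigr)=2^{n-k_1}-W(g),
\end{equation*}
where $W(g)$ is computed over $\mathbb{F}_2^{n-k_1}$.

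Applying the induction hypothesis to $g$, whose layers have sizes $k_2,\ldots,k_r$ and whose total number of variables is $n-k_1$, and then reindexing the resulting alternating sum (shifting the summation index $j\mapsto j+1$ and rewriting the exponent $(n-k_1)-(k_2+\cdots+k_j)$ as $n-(k_1+\cdots+k_j)$) converts $2^{n-k_1}-W(g)$ into exactly the telescoping alternating sum $\sum_{j=1}^r(-1)^{j-1}2^{\,n-\sum_{i=1}^j k_i}$, completing the induction. Finally, subtracting this from $2^n$ and absorbing $2^n$ as the $j=0$ term (using the stated convention $\sum_{i=1}^0 k_i=0$) yields \eqref{eq4.4}.

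I expect the only delicate step to be the bookkeeping in this reindexing: matching the exponents coming from the inductive hypothesis for $g$ (which live over $n-k_1$ variables) to the exponents of the target formula (which live over $n$ variables), and verifying that the signs line up after the index shift so that the extra top term $2^{n-k_1}$ becomes precisely the $j=1$ summand. The conceptual content—the decomposition $f_r=M_1\cdot(g\oplus 1)$ and the independent count of the two disjoint variable blocks—is straightforward once the disjointness of the layers from Theorem \ref{th3.2} is invoked, and the complement identity handling \eqref{eq4.4} is routine.
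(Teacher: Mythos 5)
Your proof is correct, but it takes a genuinely different route from the paper. You induct on the layer number $r$: writing $f_r=M_1\cdot(g\oplus 1)$ with $g$ the $(r-1)$-layer function in the remaining $n-k_1$ variables, using that $M_1=1$ on exactly one assignment of its (disjoint) block to get $W(f_r)=2^{n-k_1}-W(g)$, and then reindexing the alternating sum; your sign/exponent bookkeeping checks out, since $-(-1)^{j'-1}=(-1)^{(j'+1)-1}$ and $(n-k_1)-(k_2+\cdots+k_{j'+1})=n-\sum_{i=1}^{j'+1}k_i$. The paper instead does a direct, non-inductive count: it partitions the set $\{x: f_r(x)=1\}$ into disjoint groups according to the first even-indexed layer that vanishes (group $j$: $M_1=\cdots=M_{2j-1}=1$, $M_{2j}=0$, contributing $2^{n-k_1-\cdots-k_{2j-1}}-2^{n-k_1-\cdots-k_{2j}}$ states) plus a final group where all layers equal $1$, and sums these telescoping contributions; it must split into cases by the parity of $r$ and only writes out the odd case, leaving the even case as ``similar.'' Your induction buys a uniform argument with no parity case analysis and no need to describe the solution set explicitly, at the cost of a recursion; the paper's count buys an explicit combinatorial description of exactly which inputs satisfy $f_r=1$, which is of independent interest (it is reused in spirit in the activity computations that follow). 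Both arguments rest on the same key observation about extended monomials and both obtain Equation \eqref{eq4.4} by complementation, $W(f_r\oplus 1)=2^n-W(f_r)$, exactly as you do.
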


\begin{proof}
First, consider the Hamming weight of $f_r$.
When $r=1$, we know the result is true by the above observation.
When $r>1$, we consider two cases:

\emph{Case A}: $r=2t+1$ is odd.
 Then all the states on which $f$ evaluates to 1 will be divided into the following disjoint groups:
\begin{itemize}
\item
Group $j$ for $j=1,\ldots ,t: M_1=1, M_2=1, \ldots ,M_{2j-1}=1, M_{2j}=0$; 
\item
Group $t+1$ : $M_1=1, M_2=1, \ldots , M_{2t}=1, M_{2t+1}=M_r=1$.
\end{itemize}

In Group $j$, the number of states is 
$$
(2^{k_{2j}}-1)2^{n-k_1-\cdots-k_{2j}}=2^{n-k_1-\cdots-k_{2j-1}}-2^{n-k_1-\cdots -k_{2j}}.
$$
In Group $t+1$, the number of states is $2^{n-k_1-\cdots -k_r}=1$.

Adding all of them together, we get Equation \ref{eq4.3}.

\medskip
\emph{Case B}: $r=2t$ is even.
The proof in this case is similar, and we omit it.
 
Because  $|\{(x_1,\ldots,x_n)|f(x_1,\ldots,x_n)=0\}|+|\{(x_1,\ldots,x_n)|f(x_1,\ldots,x_n)=1\}|=2^n$,
 we know that the Hamming weight of $f_r\oplus 1$ is equal to
 \begin{equation*}
 W(f_r\oplus 1)=2^n-W(f_r)=2^n-\sum_{j=1}^r(-1)^{j-1}2^{n-\sum_{i=1}^jk_i}
 \end{equation*}
 \begin{equation*}
=\sum_{j=0}^r(-1)^{j}2^{n-\sum_{i=1}^jk_i},
 \end{equation*}
 where $\sum_{i=1}^0k_i$ should be interpreted as $0$.
\end{proof}

In the following, we will calculate the activities of the variables of any nested canalizing function. 
For this, we will use the formula for the Hamming weight of a nested canalizing function since the function in the summation 
will be reduced to a nested canalizing function, for which the first layer is a product of 
a few layers of the original nested canalizing function. 
Let $f$ be nested canalizing, written in the form of Theorem \ref{th3.2}. 
Without loss of generality, we can assume that
 $M_1=(x_1\oplus a_1)(x_2\oplus a_2)\cdots (x_{k_1}\oplus a_{k_1})$. Let
  $m_1=(x_1\oplus a_1)\cdots(x_{i-1}\oplus a_{i-1})(x_{i+1}\oplus a_{i+1})\cdots (x_{k_1}\oplus a_{k_1})$, 
  so that $M_1=(x_i\oplus a_i)m_1$.

 If $r=1$, i.e., $k_1=n$, then 
 \begin{equation*}
\lambda_i^f=\frac{1}{2^{n-1}}\sum_{(x_1,\ldots,x_{i-1},x_{i+1},\ldots,x_n)\in \mathbb{F}_2^{n-1}}
\end{equation*}
\begin{equation*}
 (f(x_1,\ldots,\overset{i}{0},\ldots,x_n)\oplus f(x_1,\ldots,\overset{i}{1},\ldots,x_n))
\end{equation*}
\begin{equation*}
=\frac{1}{2^{n-1}}\sum_{(x_1,\ldots,x_{i-1},x_{i+1},\ldots,x_n)\in \mathbb{F}_2^{n-1}}m_1=
\end{equation*}
\begin{equation*}
\frac{1}{2^{n-1}}W(m_1)=\frac{1}{2^{n-1}}.
\end{equation*}

If $1<r\leq n-1$,
then consider the activity of $x_i$ in the first layer, i.e.,  $1\leq i\leq k_1$. We have
 \begin{equation*}
\lambda_i^f=\frac{1}{2^{n-1}}\sum_{(x_1,\ldots,x_{i-1},x_{i+1},\ldots,x_n)\in \mathbb{F}_2^{n-1}}
\end{equation*}
\begin{equation*}
(f(x_1,\ldots,\overset{i}{0},\ldots,x_n)\oplus f(x_1,\ldots,\overset{i}{1},\ldots,x_n))
\end{equation*}
\begin{equation*}
=\frac{1}{2^{n-1}}\sum_{(x_1,\ldots,x_{i-1},x_{i+1},\ldots,x_n)\in \mathbb{F}_2^{n-1}}
\end{equation*}
\begin{equation*}
m_1(M_{2}(\cdots(M_{r-1}%
(M_{r}\oplus 1)\oplus 1)\cdots)\oplus 1)
\end{equation*}
\begin{equation*}
=\frac{1}{2^{n-1}}W(m_1(M_{2}(\cdots(M_{r-1}%
(M_{r}\oplus 1)\oplus 1)\cdots)\oplus 1)).
\end{equation*}
$=\left\{
\begin{array}{ll}
\frac{1}{2^{n-1}}\sum_{j=1}^r(-1)^{j-1}2^{n-1-(\sum_{i=1}^jk_i-1)}, & if\quad k_1>1\\
\frac{1}{2^{n-1}}\sum_{j=0}^{r-1}(-1)^{j}2^{n-1-\sum_{i=1}^jk_{i+1}}, & if\quad k_1=1.
\end{array}\right.$

$=\left\{
\begin{array}{ll}
\frac{1}{2^{n-1}}\sum_{j=1}^r(-1)^{j-1}2^{n-\sum_{i=1}^jk_i}, & if\quad k_1>1\\
\frac{1}{2^{n-1}}\sum_{j=0}^{r-1}(-1)^{j}2^{n-\sum_{i=0}^jk_{i+1}}, & if\quad k_1=1.
\end{array}\right.$

$=\left\{
\begin{array}{ll}
\frac{1}{2^{n-1}}\sum_{j=1}^r(-1)^{j-1}2^{n-\sum_{i=1}^jk_i}, & if\quad k_1>1\\
\frac{1}{2^{n-1}}\sum_{j=0}^{r-1}(-1)^{j}2^{n-\sum_{i=1}^{j+1}k_{i}}, & if\quad k_1=1.
\end{array}\right.$

$=\left\{
\begin{array}{ll}
\frac{1}{2^{n-1}}\sum_{j=1}^r(-1)^{j-1}2^{n-\sum_{i=1}^jk_i}, & if\quad k_1>1\\
\frac{1}{2^{n-1}}\sum_{j=1}^r(-1)^{j-1}2^{n-\sum_{i=1}^jk_i}, & if\quad k_1=1.
\end{array}\right.$

$=\frac{1}{2^{n-1}}\sum_{j=1}^r(-1)^{j-1}2^{n-\sum_{i=1}^jk_i}$
by Theorem \ref{th4.1}. Note, in the above, $k_1=1$ means $m_1=1$,  so
we used Equation \ref{eq4.4} with layer number $r-1$, and the first layer is $M_2$ for $n-1$ variable functions.

Now let us consider the variables in the $l-th$  layer, i.e., $x_i$ is an essential variable of $M_l$, $2\leq l\leq r-1$. We have
$M_l=(x_i+a_i)m_l$ and 
\begin{equation*}
\lambda_i^f=\frac{1}{2^{n-1}}\sum_{(x_1,\ldots,x_{i-1},x_{i+1},\ldots,x_n)\in \mathbb{F}_2^{n-1}}
\end{equation*}
\begin{equation*}
(f(x_1,\ldots,\overset{i}{0},\ldots,x_n)\oplus f(x_1,\ldots,\overset{i}{1},\ldots,x_n))
\end{equation*}
\begin{equation*}
=\frac{1}{2^{n-1}}\sum_{(x_1,\ldots,x_{i-1},x_{i+1},\ldots,x_n)\in \mathbb{F}_2^{n-1}}
\end{equation*}
\begin{equation*}
M_1\cdots M_{l-1}m_l(M_{l+1}(\cdots (M_r\oplus 1)\cdots )\oplus 1).
\end{equation*}
\begin{equation*}
=\frac{1}{2^{n-1}}\sum_{j=1}^{r-l+1}(-1)^{j-1}2^{n-1-((k_1+\cdots + k_{l}-1)+k_{l+1}+\cdots  +k_{j+l-1}))}=
\end{equation*}
\begin{equation*}
\frac{1}{2^{n-1}}\sum_{j=1}^{r-l+1}(-1)^{j-1}2^{n-\sum_{i=1}^{j+l-1}k_i}
\end{equation*}
by Equation \ref{eq4.3} in Theorem \ref{th4.1}. Note that $M_1\cdots M_{l-1}m_l$ is the first layer, $M_{l+1}$ is the second layer, etc.

Let $x_i$ be the variable in the last layer $M_{r}$, then we have
\begin{equation*}
=\frac{1}{2^{n-1}}\sum_{(x_1,\ldots,x_{i-1},x_{i+1},\ldots,x_n)\in \mathbb{F}_2^{n-1}}M_1M_2\cdots\ M_{r-1}m_r=\frac{1}{2^{n-1}}.
\end{equation*}
Variables in the same layer have the same activities, so we use $A_l^f$ to stand for the activity number of each variable in the $lth$ layer $M_l$, $1\leq l\leq r$. We find that  
the formula of $A_l^f$ for $2\leq l\leq r-1$ is also true when $l=r$ or $r=1$. The next theorem summarizes these.

\begin{theorem}\label{th4.2}
Let $f$ be a nested canalizing function, written as in Theorem \ref{th3.2}.
Then the activity of each variable in the $lth$  layer , $1\leq l\leq r$, is
\begin{equation}\label{4.5}
A_l^f=\frac{1}{2^{n-1}}\sum_{j=1}^{r-l+1}(-1)^{j-1}2^{n-\sum_{i=1}^{j+l-1}k_i} .
\end{equation}
The average sensitivity of $f$ is 
\begin{equation}\label{eq4.6}
s^f=\sum_{l=1}^rk_lA_l^f=\frac{1}{2^{n-1}}\sum_{l=1}^r k_l\sum_{j=1}^{r-l+1}(-1)^{j-1}2^{n-\sum_{i=1}^{j+l-1}k_i} .
\end{equation}
\end{theorem}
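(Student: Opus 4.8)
The plan is to compute each $A_l^f$ directly from the definition (\ref{act2}) of activity and then obtain $s^f$ by summing over all variables. Since the computation was carried out in the discussion preceding the statement, the proof mainly organizes it and verifies that the single formula (\ref{4.5}) covers every layer. First I would note that activity is unaffected by the trailing constant $b$ of Theorem \ref{th3.2}, because $\partial f/\partial x_i=\partial(f\oplus b)/\partial x_i$; hence it suffices to work with $f_r$.

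Fix a variable $x_i$ in the $l$th layer and write $M_l=(x_i\oplus a_i)m_l$, where $m_l$ collects the remaining $k_l-1$ factors of $M_l$. Because the layers have pairwise disjoint variable sets, $x_i$ occurs only inside $M_l$; moreover $M_l$ occurs exactly once in the nested expression for $f_r$ and only through multiplication and translation by constants, so $f_r$ expands as $f_r=C\,M_l\oplus D$ with $C=M_1\cdots M_{l-1}(M_{l+1}(\cdots(M_r\oplus 1)\cdots)\oplus 1)$ and $D$ independent of $x_i$. Since $M_l|_{x_i=1}\oplus M_l|_{x_i=0}=m_l$, this gives
\begin{equation*}
\frac{\partial f_r}{\partial x_i}=M_1\cdots M_{l-1}\,m_l\,\bigl(M_{l+1}(\cdots(M_r\oplus 1)\cdots)\oplus 1\bigr),
\end{equation*}
which matches the expression used above, and by (\ref{act2}) we have $A_l^f=\frac{1}{2^{n-1}}W(\partial f_r/\partial x_i)$, the weight being computed over the $n-1$ variables other than $x_i$.

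The key step is to recognize the right-hand side as a function in the normal form of Theorem \ref{th3.2}: its first layer is the extended monomial $M_1\cdots M_{l-1}m_l$ in $\sum_{i=1}^{l}k_i-1$ variables, and its subsequent layers are $M_{l+1},\ldots,M_r$, so it has layer number $r-l+1$ in $n-1$ variables. Applying the weight formula (\ref{eq4.3}) of Theorem \ref{th4.1} to this reduced function and using
\begin{equation*}
(n-1)-\Bigl(\sum_{i=1}^{l}k_i-1+k_{l+1}+\cdots+k_{l+j-1}\Bigr)=n-\sum_{i=1}^{j+l-1}k_i
\end{equation*}
to simplify the exponents yields (\ref{4.5}) for $2\le l\le r-1$. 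I would then treat the boundary cases. For $l=r$ the tail is empty and $\partial f_r/\partial x_i=M_1\cdots M_{r-1}m_r$ is a single extended monomial in $n-1$ variables, so its weight is $1$ and $A_r^f=1/2^{n-1}$, which is (\ref{4.5}) at $l=r$. For $l=1$ one splits on whether $k_1>1$, where (\ref{eq4.3}) applies as above, or $k_1=1$, where $m_1=1$ and the derivative is $M_2(\cdots)\oplus 1$, so (\ref{eq4.4}) with layer number $r-1$ must be invoked and shown to collapse to the same expression.

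The main obstacle is exactly this index bookkeeping in the degenerate layers: verifying that the reduced function genuinely has the nested form required by Theorem \ref{th4.1} (in particular that its last layer $M_r$ still has $k_r\ge 2$ factors), and that the two sub-cases $k_1=1$ and $k_1>1$ of the first layer together with the collapsed last layer $l=r$ all reproduce the uniform formula (\ref{4.5}). Everything else is the routine substitution above. Finally, since variables in the same layer have equal activity and layer $l$ contains $k_l$ variables, summing (\ref{4.5}) over all $n$ variables via $s^f=\sum_{i=1}^n\lambda_i^f$ gives $s^f=\sum_{l=1}^r k_l A_l^f$, which is (\ref{eq4.6}).
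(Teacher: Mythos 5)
Your proposal is correct and takes essentially the same route as the paper's own proof: compute $\partial f_r/\partial x_i = M_1\cdots M_{l-1}\,m_l\,(M_{l+1}(\cdots(M_r\oplus 1)\cdots)\oplus 1)$, recognize this as a nested canalizing form in $n-1$ variables with first layer $M_1\cdots M_{l-1}m_l$ and layer number $r-l+1$, apply the weight formulas (\ref{eq4.3})/(\ref{eq4.4}) of Theorem~\ref{th4.1} with exactly the same case split (generic layers, $l=1$ with $k_1=1$ versus $k_1>1$, and $l=r$), and sum $k_lA_l^f$ over layers to obtain (\ref{eq4.6}). Your explicit decomposition $f_r=C\,M_l\oplus D$ is a slightly cleaner justification of the derivative computation than the paper's implicit one, but it is the same argument.
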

 
 Next, we analyze the formulas in Theorem \ref{th4.2}.
 
  \begin{cor}\label{co4.1}
 If $n\geq 3$, then  $A_1^f>A_2^f>\cdots >A_r^f$, and $\frac{n}{2^{n-1}}\leq  s^f < 2- \frac{1}{2^{n-2}}$ .
 \end{cor}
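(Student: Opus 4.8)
The plan is to reorganize the activity formula (\ref{4.5}) into an alternating sum of a strictly decreasing positive sequence, from which the monotonicity of the $A_l^f$ and both bounds on $s^f$ will all follow. Write $S_m=\sum_{i=1}^m k_i$ (so $S_0=0$, $S_r=n$) and $T_m=2^{\,n-S_m}$. Substituting $m=j+l-1$ in (\ref{4.5}) turns it into $A_l^f=\frac{1}{2^{n-1}}\sum_{m=l}^r(-1)^{m-l}T_m$. Since every $k_i\ge1$, the sequence $T_1>T_2>\cdots>T_r=1$ is strictly decreasing and positive, so each $A_l^f$ is a positive alternating sum of such a sequence; in particular $A_l^f>0$, and isolating the first term gives the recursion $2^{n-1}A_l^f=T_l-2^{n-1}A_{l+1}^f$ for $1\le l\le r-1$, with $2^{n-1}A_r^f=T_r=1$.

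For the monotonicity I would invoke the elementary fact that an alternating sum $c_1-c_2+c_3-\cdots$ of a strictly decreasing positive sequence satisfies $0<c_1-c_2+\cdots\le c_1$, with equality only for a single term. Applied to $A_{l+1}^f$ this yields $2^{n-1}A_{l+1}^f\le T_{l+1}$, and since $k_{l+1}\ge1$ we have $T_{l+1}=2^{-k_{l+1}}T_l\le\tfrac12 T_l$. Feeding this into the recursion gives $2^{n-1}(A_l^f-A_{l+1}^f)=T_l-2\cdot2^{n-1}A_{l+1}^f\ge T_l-2T_{l+1}\ge0$; strictness comes from the alternating-sum bound being strict when $l+1<r$, and when $l+1=r$ from $k_r\ge2$, which gives $T_r=2^{-k_r}T_{r-1}<\tfrac12 T_{r-1}$. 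This proves $A_1^f>\cdots>A_r^f$. The lower bound on $s^f$ is then immediate: since $A_l^f\ge A_r^f=2^{-(n-1)}$ for every $l$, formula (\ref{eq4.6}) gives $s^f=\sum_{l=1}^r k_l A_l^f\ge2^{-(n-1)}\sum_l k_l=n/2^{n-1}$, with equality exactly when $r=1$.

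For the upper bound I would peel off the outermost layer: let $g$ be the nested canalizing function in $n-k_1$ variables with layers $M_2,\ldots,M_r$ (note $n-k_1\ge k_r\ge2$, so $g$ is a legitimate NCF on at least two variables). Comparing the reindexed formulas shows the quantities $T_m$ are unchanged by this operation, so $A_l^f=2^{-k_1}A_{l-1}^g$ for $l\ge2$, while the recursion gives $A_1^f=2^{1-k_1}-A_2^f=2^{-k_1}(2-A_1^g)$. Substituting into (\ref{eq4.6}) collapses everything to the clean recursion
\begin{equation*}
s^f=2^{-k_1}\bigl(2k_1-k_1A_1^g+s^g\bigr).
\end{equation*}
I would then prove by induction on the layer number that every nested canalizing $h$ on $m\ge2$ variables satisfies $s^h\le 2-2^{\,2-m}$: the base case $r=1$ reads $m/2^{m-1}\le2-2^{\,2-m}$, i.e.\ $m+2\le2^m$ (equality only at $m=2$), and the inductive step rearranges the displayed recursion, using the hypothesis $s^g\le2-2^{\,2-(n-k_1)}$, into $2-s^f\ge2^{\,2-n}+2^{-k_1}\bigl(2^{k_1+1}-2k_1-2+k_1A_1^g\bigr)$.

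The delicate point, and the main obstacle, is recovering the \emph{strict} inequality for $n\ge3$. Because the base case $m=2$ holds only with equality, the inductive hypothesis can be carried merely as a non-strict bound, so the strict conclusion $s^f<2-2^{\,2-n}$ must be extracted at the final step from the genuinely positive slack $k_1A_1^g>0$ (using $A_1^g>0$ from the first paragraph) together with the elementary inequality $2^{k_1+1}\ge2k_1+2$ valid for $k_1\ge1$, which makes the parenthesized term strictly positive. When $f$ itself has $r=1$ the recursion is unavailable, and one checks directly that $n+2<2^n$ is strict precisely for $n\ge3$; this is exactly where the hypothesis $n\ge3$ enters, since $n=2$ would produce equality. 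Combining the two cases yields $s^f<2-\frac{1}{2^{n-2}}$.
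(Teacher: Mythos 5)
Your proof is correct, and its first two parts coincide with the paper's: like you, the paper exploits the fact that $2^{n-1}A_l^f$ is an alternating sum of the strictly decreasing positive powers $2^{\,n-k_1-\cdots-k_j}$, $j=l,\ldots,r$, trapping it between consecutive partial sums (which gives $A_1^f>\cdots>A_r^f$) and bounding it below by $1$ (which gives $s^f\ge n/2^{n-1}$). Where you genuinely diverge is the upper bound. The paper replaces each activity by its leading term to get the pointwise bound $s^f<U(k_1,\ldots,k_r)=\frac{1}{2^{n-1}}\sum_{l=1}^{r}k_l\,2^{\,n-k_1-\cdots-k_l}$, and then maximizes $U$ over all compositions of $n$ with $k_i\ge1$, $k_r\ge2$ by an exchange argument, locating the maximum $2-\frac{1}{2^{n-2}}$ at $(k_1,\ldots,k_{n-1})=(1,\ldots,1,2)$. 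You instead prove the peeling identity $s^f=2^{-k_1}\bigl(2k_1-k_1A_1^g+s^g\bigr)$, with $g$ the NCF built from the inner layers $M_2,\ldots,M_r$, and induct on the layer number, extracting strictness for $n\ge3$ from $k_1A_1^g>0$, $2^{k_1+1}\ge 2k_1+2$, and the direct check $n+2<2^n$ in the one-layer case. Each route buys something. The paper's maximization exhibits the extremal layer structure explicitly, which is precisely what feeds Lemma \ref{lm4.2} and Conjecture \ref{conj4.1} on the tight bound $\frac{4}{3}$; but its exchange argument is informal, and its strict inequality $s^f<U$ actually degenerates to equality when $r=1$, a case that must then be compared separately against $\max U$ (the paper glosses over this). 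Your induction is airtight on exactly these points: it isolates the $r=1$ case, pinpoints where the hypothesis $n\ge3$ enters, and produces a recursion for the sensitivity of an NCF in terms of its inner layers that is of independent interest --- at the cost of not revealing which layer structures make the bound nearly sharp, information the paper puts to use afterwards.
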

 
 \begin{proof}
\begin{equation*}
A_l^f=\frac{1}{2^{n-1}}\sum_{j=1}^{r-l+1}(-1)^{j-1}2^{n-\sum_{i=1}^{j+l-1}k_i}
\end{equation*}
\begin{equation*}
=\frac{1}{2^{n-1}}(2^{n-k_1-\cdots -k_l}-2^{n-k_1-\cdots -k_{l+1}}+\cdots (-1)^{r-l})
\end{equation*}
Since the sum is an alternating decreasing  sequence and $k_{l+1}\geq 1$, we have
\begin{equation*}
\frac{1}{2^{n-1}}(2^{n-k_1-\cdots -k_l-1})\leq \frac{1}{2^{n-1}}(2^{n-k_1-\cdots -k_l}-2^{n-k_1-\cdots -k_{l+1}})
\end{equation*}
\begin{equation*}
< A_l^f< \frac{1}{2^{n-1}}(2^{n-k_1-\cdots -k_l}).
\end{equation*}
Hence,
\begin{equation*}
 A_{l+1}^f< \frac{1}{2^{n-1}}(2^{n-k_1-\cdots -k_{l+1}})
\end{equation*}
\begin{equation*}
 \leq \frac{1}{2^{n-1}}(2^{n-k_1-\cdots -k_l-1})<A_l^f.
\end{equation*}

We have 
\begin{equation*}
k_1A_1^f=\frac{k_1}{2^{n-1}}(2^{n-{k_1}}-2^{n-{k_1}-k_2}+\cdots (-1)^{r-1});
\end{equation*}
\begin{equation*}
k_2A_2^f=\frac{k_2}{2^{n-1}}(2^{n-k_1-k_2}-2^{n-k_1-k_2-k_3}+\cdots (-1)^{r-2});
\end{equation*}
\begin{equation*}
\cdots \cdots
\end{equation*}
\begin{equation*}
k_lA_l^f=\frac{k_l}{2^{n-1}}(2^{n-k_1-\cdots-k_l}-2^{n-k_1-\cdots-k_l-k_{l+1}}-\cdots (-1)^{r-l});
\end{equation*}
\begin{equation*}
\cdots 
\end{equation*}
\begin{equation*}
k_rA_r^f=\frac{k_r}{2^{n-1}}.
\end{equation*}
Hence, $s^f=\sum_{l=1}^rk_lA_l^f\geq \frac{k_1}{2^{n-1}}+\frac{k_2}{2^{n-1}}+\cdots +\frac{k_r}{2^{n-1}}=\frac{n}{2^{n-1}}$, so we know 
that the nested canalizing functions with layer number 1 have minimal average sensitivity.
On the other hand, 
$s^f=\sum_{l=1}^rk_lA_l^f<\frac{k_1}{2^{n-1}}2^{n-k_1}+\frac{k_2}{2^{n-1}}2^{n-k_1-k_2}+\cdots +\frac{k_l}{2^{n-1}}2^{n-k_1-\cdots -k_l}+\dots+\frac{k_r}{2^{n-1}}=U(k_1,\cdots,k_r)$,
where $k_1+\cdots +k_r=n$, $k_i\geq 1$, $i=1,\ldots, r-1$ and $k_r\geq 2$. We will find the maximal value of $U(k_1,\ldots,k_r)$ in the following.

First, we claim $k_r=2$ if $U(k_1,\ldots,k_r)$ reaches its maximal value.  
Because, if $k_r$ is increased by $1$, and the last term makes $\frac{1}{2^{n-1}}$ more contributions to $U(k_1,\ldots,k_r)$, 
then there exists $l$ such that $k_l$ will be decreased by $1$ ($k_1+\cdots+k_r=n$), hence 
\begin{equation*}
\frac{k_l}{2^{n-1}}2^{n-k_1-\cdots -k_l}
\end{equation*}
will be decreased more than $\frac{1}{2^{n-1}}$.
Now, it is obvious that $\frac{k_1}{2^{n-1}}2^{n-k_1}$ attains its maximal value only when 
$k_1=1$ or $2$, but $k_1=1$ will be the choice since it also makes all the other terms greater.
Likewise, $\frac{k_2}{2^{n-1}}2^{n-k_1-k_2}$ attains its maximal value  when $k_1=k_2=1$ or $k_1=1$ and $k_2=2$; 
again, $k_2=1$ is the best choice  to make all the other terms greater.
 In general, if $k_1=\cdots=k_{l-1}=1$, then $\frac{k_l}{2^{n-1}}2^{n-k_1-\cdots -k_l}$  attains its maximal value 
 when $k_l=1$, where $1\leq l\leq r-1$.
 In summary, we have shown that $U(k_1,\ldots,k_r)$ reaches its maximal value 
 when $r=n-1$, $k_1=\cdots=k_{n-2}=1$, $k_{n-1}=2$, and 
\begin{align*}
\label{}
Max [ U(k_1,\ldots,k_r)=U(1,\ldots,1,2 )]=   \\
    \frac{1}{2^{n-1}}(2^{n-1}+2^{n-2}+\cdots+ 2^{2}+2)=2-\frac{1}{2^{n-2}}. 
\end{align*}
\end {proof}

\begin{remark}
The minimal value of average sensitivity approaches $0$ and the maximal value of $U(k_1,\ldots,k_r)$ approaches $2$ as $n\rightarrow \infty$. Hence, $0<s^f<2$ for any NCF with an arbitrary number of variables. 
\end{remark}

In the following, we evaluate Equation \ref{eq4.6} for some parameters $k_1,\ldots,k_r$.

\begin{lemma}\label{lm4.2}
\begin{enumerate}
\item
If $r=n-1, k_1=\cdots=k_{n-2}=1, k_{n-1}=2$, then $s^f=\frac{4}{3}-\frac{3+(-1)^n}{3\times 2^n}$.
\item
Given $n\geq 4, r=n-2, k_1=\ldots=k_{n-3}=1, k_{n-2}=3$, then $s^f=\frac{4}{3}-\frac{9+5(-1)^{n-1}}{3\times 2^n}$.
\item
If $n$ is even and $n\geq 6 $, $r=\frac{n}{2}$, $k_1=1$, $k_2=\cdots=k_{\frac{n}{2}-1}=2$, $k_{\frac{n}{2}}=3$, 
then $s^f=\frac{4}{3}-\frac{4}{3\times 2^n}$. Hence, these three cardinalities are equal if $n$ is even.
\end{enumerate}
\end{lemma}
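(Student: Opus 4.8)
The plan is to evaluate Equation \ref{eq4.6} directly for each of the three prescribed tuples $(k_1,\ldots,k_r)$. All three are routine but delicate computations, so the first step I would take is to put the formula for $s^f$ into a more convenient shape before specializing. Starting from $s^f=\frac{1}{2^{n-1}}\sum_{l=1}^r k_l\sum_{j=1}^{r-l+1}(-1)^{j-1}2^{n-\sum_{i=1}^{j+l-1}k_i}$, I would substitute $m=j+l-1$ and interchange the order of summation. Writing $S_m=k_1+\cdots+k_m$ for the partial sums and $T_m=\sum_{l=1}^m(-1)^{m-l}k_l$ for the alternating partial sums, this yields the single sum
\begin{equation*}
s^f=\frac{1}{2^{n-1}}\sum_{m=1}^r 2^{n-S_m}\,T_m .
\end{equation*}
This is the form I would use throughout, since $S_m$ and $T_m$ are trivial to read off from each tuple, and the boundary term is already simple: because $S_r=n$ in every case, the top term is $2^{n-S_r}T_r=T_r$.

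For each case I would then compute $S_m$ and $T_m$ explicitly. The pleasant feature, which I would exploit in all three parts, is that the alternating sums $T_m$ collapse: when $k_1=\cdots=k_m=1$ one has $T_m=\tfrac{1-(-1)^m}{2}$, i.e.\ $T_m=1$ for $m$ odd and $T_m=0$ for $m$ even (Parts 1 and 2), while in Part 3 the interior block of $2$'s makes $T_m=1$ for every $m\le r-1$; in each case a short separate computation handles the single exceptional last index, where the larger value $k_r\in\{2,3\}$ enters. After this collapse, the surviving terms form a geometric progression in powers of $4$, namely a sum of the shape $2^{n-1}+2^{n-3}+\cdots$, which I would sum in closed form and then add to the boundary term $T_r$.

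The one genuine subtlety, and where I expect the bookkeeping to be most error-prone, is the dependence on the parity of $n$: both the number of surviving (odd-index) terms in the geometric sum and the exceptional value $T_r$ depend on the parity of $n$. I would therefore split each of Parts 1 and 2 into the two parities, carry out the geometric summation separately, and check at the end that the two answers are unified by the stated expressions $\frac{4}{3}-\frac{3+(-1)^n}{3\cdot 2^n}$ and $\frac{4}{3}-\frac{9+5(-1)^{n-1}}{3\cdot 2^n}$. For Part 3, where $n$ is assumed even, a similar split according to $n\bmod 4$ shows that $T_r=2$ regardless, so no parity distinction survives and one obtains the single value $\frac{4}{3}-\frac{4}{3\cdot 2^n}$. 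Finally, substituting $(-1)^n=1$ and $(-1)^{n-1}=-1$ into the formulas of Parts 1 and 2 shows that both specialize to $\frac{4}{3}-\frac{4}{3\cdot 2^n}$ when $n$ is even, which is the concluding assertion of the lemma.
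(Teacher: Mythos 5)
Your proposal is correct; I checked all three evaluations against Theorem \ref{th4.2} and they reproduce the stated values (including the parity bookkeeping in Parts 1 and 2, and the common value $\frac{4}{3}-\frac{4}{3\times 2^n}$ for even $n$). Your route through Equation \ref{eq4.6} is organized differently from the paper's, though. The paper keeps the double sum in its original order: for fixed layer $l$ it first evaluates the inner alternating sum over $j$ as a geometric series, obtaining (for the parameters of Part 1) $k_l\bigl(\frac{1}{3}2^{n-l+1}+\frac{1}{3}(-1)^{n-l}\bigr)$, and then sums over $l$; moreover it carries this out only for Part 1 and dismisses Parts 2 and 3 as ``routine applications.'' You instead interchange the order of summation, which packages the layer sizes into the alternating partial sums $T_m=\sum_{l=1}^m(-1)^{m-l}k_l$ and gives the uniform identity $s^f=\frac{1}{2^{n-1}}\sum_{m=1}^r 2^{n-S_m}T_m$ with $S_m=k_1+\cdots+k_m$. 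This buys genuine uniformity: the collapse $T_m\in\{0,1\}$ (Parts 1, 2) or $T_m=1$ (Part 3) reduces every case to one short geometric sum plus the boundary term $T_r$, so your argument actually completes the two cases the paper leaves to the reader, and the same identity would handle any other parameter choice (e.g.\ the $(1,2,1,2)$ example of Remark \ref{re4.2}) with no new work. Two minor remarks: in Part 3 no split according to $n \bmod 4$ is needed, since $T_{m}=1$ for all $m\le r-1$ follows directly by the recursion $T_m=k_m-T_{m-1}=2-1$, whence $T_r=3-1=2$ outright; and it is worth stating explicitly that $(-1)^{j-1}=(-1)^{m-l}$ under the substitution $m=j+l-1$, as that sign conversion is the only place the interchange could go wrong.
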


\begin{proof}
When $r=n-1$, then $k_1=\cdots=k_{n-2}=1$, $k_{n-1}=2$ by Equation \ref{eq4.6}. We have
\begin{equation*}
s^f=\sum_{l=1}^{r}k_lA_l^f=\frac{1}{2^{n-1}}\sum_{l=1}^{n-1} k_l\sum_{j=1}^{n-l}(-1)^{j-1}2^{n-\sum_{i=1}^{j+l-1}k_i}
\end{equation*}
\begin{equation*}
=\frac{1}{2^{n-1}}\sum_{l=1}^{n-1} k_l(\sum_{j=1}^{n-l-1}(-1)^{j-1}2^{n-j-l+1}+(-1)^{n-l-1})
\end{equation*}
\begin{equation*}
=\frac{1}{2^{n-1}}\sum_{l=1}^{n-1} k_l(\frac{1}{3}2^{n-l+1}+\frac{1}{3}(-1)^{n-l})
\end{equation*}
\begin{equation*}
=\frac{1}{2^{n-1}}(\sum_{l=1}^{n-2} (\frac{1}{3}2^{n-l+1}+\frac{1}{3}(-1)^{n-l})+2)=\frac{4}{3}-\frac{3+(-1)^n}{3\times 2^n}.
\end{equation*}
 The other two formulas are also routine applications of Equation \ref{eq4.6}.
\end{proof}

Based on our numerical calculations, Lemma \ref{lm4.2}, and the proof of Corollary \ref{co4.1}, we can make the
following conjecture.

\begin{conj}\label{conj4.1}
The maximal value of $s^f$ is $s^f=\frac{4}{3}-\frac{3+(-1)^n}{3\times 2^n}$. It will be reached if the nested
canalizing function has the maximal layer number $n-1$, 
i.e., if $r=n-1$, $k_1=\cdots=k_{n-2}=1$, $k_{n-1}=2$. When $n$ is even, this maximal value is also reached by a nested
canalizing function with parameters 
$n\geq 4$, $r=n-2$, $k_1=\cdots=k_{n-3}=1$, $k_{n-2}=3$ or $n\geq 6 $, $r=\frac{n}{2}$, $k_1=1$, $k_2=\cdots=k_{\frac{n}{2}-1}=2$ and $k_{\frac{n}{2}}=3$.
\end{conj}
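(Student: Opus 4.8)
The plan is to convert Conjecture~\ref{conj4.1} into a purely combinatorial extremal problem and attack it by induction on $n$. Writing $S_j=k_1+\cdots+k_j$, formula~\eqref{4.5} shows that every activity $A_l^f$, and hence $s^f$ via~\eqref{eq4.6}, depends only on the composition $(k_1,\ldots,k_r)$ of $n$. So $s^f$ is genuinely a function $s(k_1,\ldots,k_r)$, and the conjecture asserts that it is maximized, over all compositions with $k_i\ge1$ for $i<r$ and $k_r\ge2$, at $(1,\ldots,1,2)$, with value $\frac{4}{3}-\frac{3+(-1)^n}{3\cdot2^n}$ already computed in Lemma~\ref{lm4.2}(1). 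Thus the whole statement reduces to an extremal problem for one explicit function of a composition, and the attainment at the claimed optima is a done via Lemma~\ref{lm4.2}(1)--(3); what remains is the matching upper bound.

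The structural tool I would use is a \emph{peel-off-the-top-layer} recursion. Let $f'$ be the NCF on the remaining $n-k_1$ variables with composition $(k_2,\ldots,k_r)$ given by Proposition~\ref{prop3.1}. Comparing~\eqref{4.5} for $f$ and $f'$ term by term, every layer below the first satisfies $A_l^f=2^{-k_1}A_{l-1}^{f'}$, while the top-layer activity satisfies $A_1^f=2^{-k_1}(2-A_1^{f'})$. Summing these gives the clean recursion
\[
s^f=k_1A_1^f+2^{-k_1}s^{f'}=2^{-k_1}\bigl(2k_1+s^{f'}-k_1A_1^{f'}\bigr),
\]
with base case $s=k\,2^{1-k}$, $A_1^f=2^{1-k}$ for the single-layer composition $(k)$, $k=n\ge2$ (one checks this against $n=3,4$).

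From here I would induct on $n$. The recursion shows that, among compositions with a fixed first part $k_1$, maximizing $s^f$ is the same as maximizing the tail functional $s^{f'}-k_1A_1^{f'}$ over compositions of $n-k_1$; hence the overall maximum equals $\max_{k_1}2^{-k_1}\bigl(2k_1+G_{k_1}(n-k_1)\bigr)$, where $G_c(m):=\max_{w\vdash m}\bigl(s(w)-c\,A_1(w)\bigr)$ and the single-layer case is treated separately. The idea is to carry, as a \emph{strengthened} induction hypothesis, not just $\max s$ but the entire upper-left Pareto frontier of the achievable pairs $(s(w),A_1(w))$ for compositions of $m$: prepending a layer of size $a$ is the affine map $(s,A)\mapsto 2^{-a}(2a+s-aA,\ 2-A)$, which is increasing in $s$ and decreasing in $A$, so only that frontier is relevant. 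Verifying that this frontier has the conjectured shape---concretely, that every functional $s-c\,A_1$ is maximized by a staircase composition of the form $(1,\ldots,1,2)$ or $(1,\ldots,1,3)$---and then optimizing over $k_1$ would produce the closed form $\frac{4}{3}-\frac{3+(-1)^n}{3\cdot2^n}$, together with its non-unique attainment for even $n$ coming from Lemma~\ref{lm4.2}(2)--(3).

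The main obstacle is precisely this coupling between $s$ and $A_1$. Because of the alternating signs in~\eqref{4.5}, the composition maximizing $A_1$ (namely $(1,n-1)$, with $A_1=1-2^{1-n}$) is in general different from the one maximizing $s$, so no single monotone ``refine-or-merge the layers'' exchange argument can succeed; and the even-$n$ ridge on which three different compositions tie (Lemma~\ref{lm4.2}) shows the optimum is genuinely non-unique. Consequently the induction cannot track $\max s$ alone but must control the full two-dimensional trade-off, and pinning down the exact frontier---rather than the crude bound $U(k_1,\ldots,k_r)$ from Corollary~\ref{co4.1}, which discards the negative terms---is where the real difficulty lies. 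A sensible first milestone would be to prove the weaker strict inequality $s^f<\tfrac{4}{3}$ for all $n$ directly from the recursion above, and then sharpen it to the exact maximum.
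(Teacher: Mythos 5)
First, note what you are up against: in the paper this statement is exactly that---a \emph{conjecture}. The authors do not prove it; their evidence is the attainment computations of Lemma~\ref{lm4.2}, the strictly weaker bound $s^f<2-\frac{1}{2^{n-2}}$ from Corollary~\ref{co4.1} (obtained by discarding all negative terms of the alternating sums), and unpublished numerics. So there is no paper proof to compare yours against; the only question is whether your argument settles the conjecture, and it does not.

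What you do have is correct and is genuinely sharper than the paper's machinery. I verified your peel-off recursion: writing $S_m=k_1+\cdots+k_m$, formula~\eqref{4.5} gives $A_l^f=2\sum_{j}(-1)^{j-1}2^{-S_{j+l-1}}$, and passing to the tail composition $(k_2,\ldots,k_r)$ indeed yields $A_l^f=2^{-k_1}A_{l-1}^{f'}$ for $l\ge2$ and $A_1^f=2^{-k_1}\bigl(2-A_1^{f'}\bigr)$, hence $s^f=2^{-k_1}\bigl(2k_1+s^{f'}-k_1A_1^{f'}\bigr)$; this exact recursion keeps the negative terms that the paper's bound $U(k_1,\ldots,k_r)$ throws away, so it is in principle strong enough to reach the tight constant $\frac{4}{3}$. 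But the argument stops precisely at the decisive step: you assert, without proof, that the upper-left Pareto frontier of achievable pairs $\bigl(s(w),A_1(w)\bigr)$---equivalently, the maximizers of every functional $s-c\,A_1$, $c\ge0$---consists of staircase compositions $(1,\ldots,1,2)$ and $(1,\ldots,1,3)$. That claim is the conjecture in disguise (at $c=0$ it literally contains the statement to be proved), and nothing in your write-up establishes it: the prepending maps $(s,A)\mapsto 2^{-a}(2a+s-aA,\,2-A)$ could a priori promote a point that is interior at size $m$ to the frontier at size $m+a$, so the induction does not close without an explicit quantitative description of the frontier, which you never supply. Indeed the even-$n$ ties in Lemma~\ref{lm4.2}, e.g.\ the composition $(1,2,\ldots,2,3)$, show the set of optimizers is not confined to your two staircase families, so the frontier's structure is more delicate than your induction hypothesis assumes. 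Your own closing paragraph concedes this is ``where the real difficulty lies.'' As it stands, the proposal is a useful reduction---arguably a better starting point than the paper's own bound---but not a proof; Conjecture~\ref{conj4.1} remains open after it.
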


\begin{remark}\label{re4.2}
When $n=6$, the nested canalizing function with $k_1=1$, $k_2=2$, $k_3=1$, and 
$k_4=2$ also has the maximal average sensitivity $\frac{21}{16}$. But this can not be generalized. If the above conjecture is true, then we have $0<s^f<\frac{4}{3}$ for any nested canalizing function with an 
arbitrary number of variables. In other words, both $0$ and $\frac{4}{3}$ are uniform tight bounds for any nested canalizing function.
\end{remark}

\section{Simulations of Network Dynamics}

The sensitivity of functions~\cite{Shm} has been shown to be a good indicator of the stability of dynamical networks constructed using random Boolean functions. We have generated random networks controlled by nested canalizing functions with fixed layer number $m$, 
where $m = 1,\ldots,n-1$. For our simulations we followed a similar approach as in \cite{Shm}. 
Starting with a Boolean network $F$, we sample pairs of random states $x$ and $y$ for $F$. 
Let $H(t)$ be the Hamming distance of $x$ and $y$, i.e. $H(t)$ is the number of bits in which $x$ and $y$ differ,
 and let $H(t+1)$ be the Hamming distance of $F(x)$ and $F(y)$, i.e. the Hamming distance of the successor states of $x$ and $y$. 
 A Derrida curve~\cite{Derrida} is a plot of $H(t+1)$ against $H(t)$ for all possible Hamming distances. Figure~\ref{layers_plot} shows Derrida plots for different layer numbers. Each Derrida curve was generated from 4096 random networks by taking the average Hamming distances.
 
As can be observed from Figure~\ref{layers_plot}, networks made up of nested canalizing functions of 
fixed layer number equal to $1$ are significantly more stable than networks constructed from nested canalizing 
functions with higher layer numbers. 
It should be noted that the class of nested canalizing functions with layer number equal to $1$ is the same family as the 
AND-NOT networks studied in~\cite{Veliz-Cuba}, equal to 
the class of extended monomials. Similarly, networks made up of nested canalizing functions with 
fixed layer number equal to $2$ are more stable than networks constructed from 
nested canalizing functions with higher layer numbers, and so on. 
Figure~\ref{layers_plot} shows that, as the layer number increases, networks becomes less stable. 
This matches our results in Section~\ref{sensitivies}.

\begin{figure}
\begin{center}
\includegraphics[width=\textwidth]{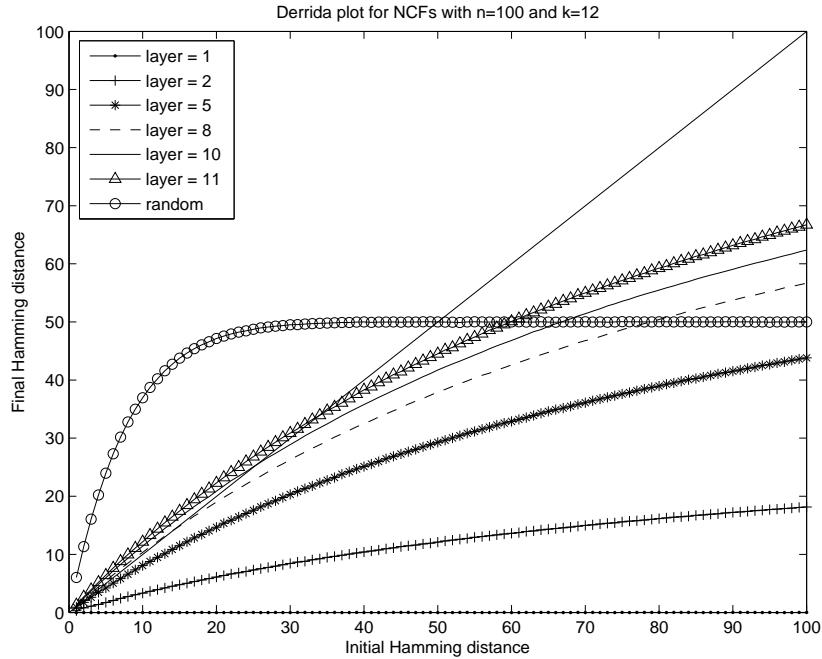}
\caption{Derrida plots for different layer numbers. Here, $n$ is the number of nodes and $k$ is the connectivity. For each layer number, a Derrida plot was generated from 4096 random networks. The $x$-axis represents
Hamming distance of pairs of states, and the $y$-axis represents Hamming distance of their images.}
\label{layers_plot}
\end{center}
\end{figure} 
\section{Conclusion}
Nested canalizing Boolean functions were inspired by structural and dynamic features of biological networks. 
In this study we took a careful look at the computational properties of Boolean functions and
of Boolean networks constructed from them. The main tool for our analysis is a particular polynomial normal form
of the Boolean functions in question. In particular, we introduced
a new invariant for nested canalizing functions, their layer number.  Using it, we
obtain an explicit formula of the number of nested canalizing functions, which improves on
the known recursive formula. Based on the
polynomial form, we also obtain a formula for the Hamming weight of a nested canalizing function. 
The activity number of each variable of a nested canalizing function is also provided with an explicit formula. 
An important result we obtain is that the average sensitivity
of any nested canalizing function is less than $2$, which provides a theoretical argument why
nested canalizing functions exhibit robust dynamic properties. 
This leads us to conjecture that the tight upper bound for the average sensitivity of any nested canalizing function is $\frac{4}{3}$.

It should be noted that all the variables in the first layer of an nested canalizing function are canalizing variables (the most dominant), 
all the variables in the second layer are the second most dominant, etc.  The fact that networks with 
low layer number are more stable than those with high layer number could be a consequence of this observation, 
since lower layer number means more dominant variables. The most extreme examples are: 
when the layer number is equal to $1$, we have proved that such a function has minimal average sensitivity, 
therefore we also conjecture that the function with layer number number equal to $n-1$ (the maximal layer number) has 
maximal average sensitivity. Hence, it should be reasonable that the corresponding networks show similar behavior.
\section*{Acknowledgments}
The authors thank the referees for insightful comments that have improved the manuscript.


\end{document}